\begin{document}

\newtheorem{thm}{Theorem}
\newtheorem{prop}[thm]{Proposition}
\newtheorem{lem}[thm]{Lemma}
\newtheorem{cor}[thm]{Corollary}
\theoremstyle{definition}
\newtheorem{defi}[thm]{Definition}
\newtheorem{ex}[thm]{Example}
\newtheorem{rem}[thm]{Remark}
\newtheorem{que}[thm]{Question}
\newtheorem*{ass}{Assumption}

\newcommand{\HOM}{\mbox{HOM}}
\newcommand{\Hom}{\mbox{Hom}}
\newcommand{\END}{\mbox{END}}
\newcommand{\Ker}{\mbox{Ker}}
\newcommand{\Image}{\mbox{Im}}
\newcommand{\iso}{\mbox{Iso}}
\newcommand{\m}{^{-1}}
\newcommand{\G}{\mathcal {G}}
\newcommand{\U}{\mathcal {U}}
\newcommand{\N}{\mathbb {N}}
\newcommand{\si}{\sigma}
\newcommand{\rh}{\rho}
\newcommand{\ta}{\tau}
\newcommand{\id}{\mbox{id}}
\newcommand{\af}{\alpha}

\newcommand{\GR}{\mbox{$\G$-$R$}}
\newcommand{\AbG}{\mbox{$\mbox{Ab}_{\G}$}}
\newcommand{\Ab}{\mbox{Ab}}
\newcommand{\rmd}{\mbox{$R$-md}}
\newcommand{\mdr}{\mbox{md-$R$}}
\newcommand{\rmds}{\mbox{$R$-md-$S$}}
\newcommand{\rmod}{\mbox{$R$-mod}}
\newcommand{\smod}{\mbox{$S$-mod}}
\newcommand{\rmods}{\mbox{$R$-mod-$S$}}
\newcommand{\modr}{\mbox{mod-$R$}}
\newcommand{\grmod}{\mbox{$\G$-$R$-mod}}
\newcommand{\gsmod}{\mbox{$\G$-$S$-mod}}
\newcommand{\gmodr}{\mbox{$\G$-mod-$R$}}
\newcommand{\grmods}{\mbox{$\G$-$R$-mod-$S$}}
\newcommand{\grmodr}{\mbox{$\G$-$R$-mod-$R$}}
\newcommand{\gmods}{\mbox{$\G$-mod-$S$}}
\newcommand{\grmd}{\mbox{$\G$-$R$-md}}
\newcommand{\gsmd}{\mbox{$\G$-$S$-md}}
\newcommand{\gmdr}{\mbox{$\G$-md-$R$}}
\newcommand{\gmds}{\mbox{$\G$-md-$S$}}
\newcommand{\grmds}{\mbox{$\G$-$R$-md-$S$}}

\newcommand{\rumod}{R\text{-\textup{\textbf{umod}}}}
\newcommand{\sumod}{S\text{-\textup{\textbf{umod}}}}
\newcommand{\modgr}{\mbox{{\bf mod}-}\G\mbox{-$R$}}
\newcommand{\grumod}{\G\textup{-}\rumod}
\newcommand{\gsumod}{\G\textup{-}\sumod}
\newcommand{\umodgr}{\G\text{-\textup{\textbf{umod}}}\textup{-}R}
\newcommand{\umodgs}{\G\text{-\textup{\textbf{umod}}}\textup{-}S}
\newtheorem{assertion}[thm]{{\sc Assertion}}
\newcommand\restr[2]{{
  \left.\kern-\nulldelimiterspace 
  #1 
  \vphantom{\big|} 
  \right|_{#2} 
  }}

\title[Graded modules over object-unital groupoid graded rings]{Graded modules over object-unital groupoid graded rings}

\author[J. Cala]{Juan  Cala }
\address{Escuela de Matem\'{a}ticas,
Universidad Industrial de Santander,
Carrera 27 Calle 9,
Edificio Camilo Torres
Apartado de correos 678,
Bucaramanga, Colombia}
\email{{\scriptsize jccalab@gmail.com}}

\author[P. Lundstr\"{o}m]{Patrik Lundstr\"{o}m}
\address{University West,
Department of Engineering Science, 
SE-46186 Trollh\"{a}ttan, Sweden}
\email{{\scriptsize patrik.lundstrom@hv.se}}

\author[H. Pinedo]{H. Pinedo}
\address{Escuela de Matem\'{a}ticas,
Universidad Industrial de Santander,
Carrera 27 Calle 9,
Edificio Camilo Torres
Apartado de correos 678,
Bucaramanga, Colombia}
\email{{\scriptsize hpinedot@uis.edu.co}}

\subjclass[2010]{16D10, 
 16D40,
 16D50,
 16D90 
}

\keywords{groupoid graded module; free, finitely generated, finitely
presented, projective, injective, small and flat modules; pure sequences}

\begin{abstract}
In a previous article (see \cite{CNP}), 
we introduced and analyzed ring-theoretic properties of
object unital $\G$-graded rings $R$, where $\G$ is a groupoid.
In the present article, we analyze the category $\grmod$ of unitary $\G$-graded modules over such rings.
Following ideas developed earlier
by one of the authors in \cite{lundstrom2004}, 
we analyse the forgetful functor $U \colon \grmod \to \rmod$ and aim to determine properties 
$\mathcal{P}$ for which the following implications are valid for modules $M$ in $\grmod$: 
$M$ is $\mathcal{P}$ $\Rightarrow$ $U(M)$ is $\mathcal{P}$; 
$U(M)$ is $\mathcal{P}$ $\Rightarrow$ $M$ is $\mathcal{P}$. 
Here we treat the cases when $\mathcal{P}$ is any of the properties: 
direct summand, projective, injective, free, simple and semisimple. 
Moreover, graded versions of results concerning classical module theory are established, 
as well as some structural properties related to the category $\grmod$.
\end{abstract}

\maketitle

\section{Introduction}

Let $R$ denote an associative, but not necessarily unital, ring, 
and let $M$ be a left $R$-module.
If $R$ is unital, then we let $1_R$ denote the multiplicative identity of $R$.
If $X \subseteq R$ and $Y \subseteq M$, then we let $XY$ denote
the set of finite sums of elements of the form $xy$ 
for $x \in X$ and $y \in Y$. 
Following \cite{AnM}, we say that $M$ is {\it unitary} if $RM=M$.
We denote by $\rmd$ ($\rmod$) the category having (unitary) left 
$R$-modules as objects and $R$-module homomorphisms as morphisms.
Analogously, the category $\mdr$ ($\modr$) of right (unitary)
modules is defined.

Let $G$ be a group. 
Recall that $R$ is said to be {\it graded by $G$} (or $G$-\emph{graded})
if for all $g \in G$ there is an additive subgroup $R_g$ of $R$ such that $R = \bigoplus_{g \in G} R_g$
and for all $g,h \in G$ the inclusion $R_g R_h \subseteq R_{gh}$ holds.
The class of group graded rings contains numerous important mathematical structures, such as
polynomial rings, skew and twisted group rings, crossed products and partial versions of these
(see e.g. \cite{batista2017,N82,nastasescu2004} and the references therein).
Therefore, a theory of group graded rings can be applied to the study of completely different types of constructions. 
This not only gives new results for all of these constructions
simultaneously, but also serves as a unification of a multitude of known theorems concerning these.

Many relevant examples of rings, for instance rings of matrices, crossed
product algebras defined by separable extensions, crossed groupoid algebras,
including twisted and skew groupoid algebras, and partial versions of these, 
are not, in any natural way, graded by groups, but
instead by groupoids (see for instance \cite{lundstrom2004}, \cite{lundstrom2005}, \cite{lundstrom2006} and \cite{NyOP22018}).
Also, many of these structures are non-unital.
This inspired us in \cite{CNP} to introduce and analyse the class of {\it object unital
groupoid graded rings}, thereby extending some of the results from 
\cite{lundstrom2004,lundstrom2005,lundstrom2006} to cover non-unital rings.
Let us briefly describe these structures.

A groupoid $\G$ is a small category with the property that all morphisms are isomorphisms. 
Equivalently, this can be defined by saying that $\G$ is a set equipped with
a unary operation $\G \ni \sigma \mapsto \sigma^{-1} \in \G$ (inversion) and a partially defined multiplication
$\G \times \G \ni (\sigma,\tau) \mapsto \sigma \tau \in \G$ (composition) such that
for all $\si,\ta,\rh \in \G$ the following four axioms hold:
(i) $(\si^{-1})^{-1} = \si$;
(ii) if $\si \tau$ and $\ta \rh$ are defined, then $(\si \ta) \rh$ and $\si (\ta \rh)$ are defined and $(\si \ta) \rh=\si (\ta \rh)$;
(iii) the {\it domain} $d(\si) := \si^{-1} \si$ is always defined and if $\si \tau$ is defined, then $d(\si) \tau = \tau$;
(iv) the {\it range} $r(\ta) := \ta \ta^{-1}$ is always defined and if $\si \tau$ is defined, then $\si r(\ta) = \si$.
The maps $d$ and $r$ have a common image denoted by $\G_0$, which is called the \textit{unit space} of $\G$.
The set $\G_2 = \{ (\si,\ta) \in \G \times \G \mid \mbox{$\sigma \tau$ is defined} \}$ 
is called the set of {\it composable pairs} of $\G$. 
For more details about groupoids, the interested reader may consult for example \cite{L} or \cite{ren}.  

Recall from \cite{lundstrom2004}  that
a ring $R$ is said to be \emph{graded by $\G$} (or \emph{$\G$-graded}) if there 
for all $\si \in \G$ is an additive subgroup $R_{\si}$ of $R$ such that $R = \bigoplus_{\si \in \G} R_{\si}$
and for all $\si,\ta \in \G$ the inclusion $R_{\si} R_{\ta} \subseteq R_{\si \ta}$ holds,
if $(\si,\ta) \in \G_2$, and $R_{\si} R_{\ta} = \{ 0 \}$, otherwise.
From \cite{CNP} we say  that a $\G$-graded ring $R$ is  {\it object unital} if 
for all $e \in \G_0$ the ring $R_e$ is unital and for all $\si \in \G$ and all 
$r \in R_{\si}$ the equalities $1_{R_{r(\si)}} r = r 1_{R_{d(\si)}} = r$ hold.

Suppose that $R$ is a $\G$-graded ring and $M$ is a left $R$-module.
Recall from \cite{lundstrom2004} that the module $M$ is said to be 
{\it graded by $\G$} (or $\G$-graded) if there for all $\si \in \G$ is
an additive subgroup $M_\si$ of $M$ such that $M = \bigoplus_{\si \in \G} M_\si$
and for all $\si,\tau \in \G$ the inclusion $R_{\si} M_{\tau} \subseteq M_{\si \ta}$ holds,
if $(\si,\ta) \in \G_2$, and $R_{\si} M_{\ta} = \{ 0 \}$, otherwise.
If $N$ is another left $R$-module graded by $\G$, then a left $R$-module homomorphism
$f : M \to N$ is said to be \emph{graded} if for all $\si \in \G$ the inclusion $f(M_{\si}) \subseteq N_{\si}$ holds.
The collection of (unitary) $\G$-graded left $R$-modules and the collection of graded homomorphisms
together form an abelian category which we denote by 
$\grmd$ ($\grmod$). 
In fact, it is not hard to show that $\grmod$ is even a Grothendieck category.
Analogously, the category $\gmdr$ ($\gmodr$) of (unitary) 
$\G$-graded right $R$-modules is defined.

A natural class of functors to study from a categorical perspective are the forgetful functors,
which simply forget parts of the structure.
In this article, we study the {\it ungrading} functor
$U \colon \grmod \to \rmod$
which is defined by forgetting the grading. 
More precisely, we wish to answer the following 
\begin{que}\label{thequestion}
Suppose that $\G$ is a groupoid and $R$ is an object unital $\G$-graded ring.
For which graded modules $M$ in $\grmod$ 
and for what properties $\mathcal{P}$ are either of the 
following two implications valid? 
\begin{equation*}
\mbox{$M$ has property $\mathcal{P}$ $\Longrightarrow$ $U(M)$ has property $\mathcal{P}$}
\end{equation*}
\begin{equation*}
\mbox{$U(M)$ has property $\mathcal{P}$ $\Longrightarrow$ $M$ has property $\mathcal{P}$}
\end{equation*}
\end{que}
When $R$ is unital and $\G$ is a group, 
then Question \ref{thequestion} has been investigated for many different properties $\mathcal{P}$ including:
direct summand, free, finitely generated, finitely presented, projective, injective,
essential, small and flat (see \cite[Sections I.2--I.3]{N82}).
Several of these results have been extended to the case 
when $\G$ is a groupoid and $R$ is unital (see \cite{lundstrom2004,lundstrom2005,lundstrom2006}).
The aim of this article is to establish these and other results in the non-unital situation. 
Here is a detailed outline of the article.

In Section \ref{sec:preliminaries}, 
we state our conventions on rings and modules
and  fix some notation with respect to these objects.
After that, we define various concepts related to graded
rings and modules. We also give some examples of graded rings
(see Example \ref{firstexample} and Example \ref{secondexample}).
At the end of the section, we prove a result
(see Proposition \ref{unitarymodule}) that connects
the categories $\grmd$ and $\grmod$.

In Section \ref{sec:homomorphisms}, we
introduce suspension for modules in $\grmd$
and we prove some auxiliary results about these
(see Propositions \ref{prop:suspensionunitary}-\ref{prop:Ggroup}). 
After that, we introduce the additive group of semi-graded maps
between objects in $\grmd$ 
(see Definition \ref{def:semigraded})
and we prove some 
structural results for these objects
(see Prop. \ref{prophom},
Prop. \ref{prop:finite},
Prop. \ref{propactions},
Prop. \ref{propalpha}
and Cor. \ref{coralpha}).
At the end of the section, we state and prove a groupoid graded 
version of a well-known result regarding the 
hom and tensor functors for bimodules
(see Proposition \ref{adjunction}).

In Section \ref{sec:theungradingfunctor},
we deal with Question \ref{thequestion},
in a non-unital situation, that is we
analyse properties of the ungrading functor
$U$ from $\grmd$ (or $\grmod$) to $\rmd$ (or $\rmod$),
including the properties
direct summand, free, finitely generated, finitely presented, projective, injective,
essential, small and flat.

In Section \ref{sec:semisimplicity},
we prove a graded non-unital version of a classical result 
concerning semisimple unital rings 
(see Proposition \ref{ss2}). 
After that, we show that semisimplicity is  well behaved
under the preimage of the forgetful functor 
(see Proposition \ref{preimageU}).
At the end of this section, we obtain a result
relating graded simplicity of $R$ (as an object in $\grmod$)
to graded injectivity and graded projectivity 
(see Proposition \ref{equivalent}).

\section{Preliminaries}\label{sec:preliminaries}


\subsection{Rings and modules}
Let $R$ be a ring.
By this we mean that $R$ is associative but not necessarily unital.
Then $R$ is called {\it idem\-potent} if $R R = R$.
Following Fuller \cite{Fu}, we say that $R$ has {\it enough idempotents} if there exists a set 
$\{ e_i \}_{i \in I}$ of orthogonal idempotents in $R$ (called a complete set of idempotents for $R$) 
such that $R = \bigoplus_{i\in I} R e_i = \bigoplus_{i \in I} e_i R.$
Following \'{A}nh and M\'{a}rki \cite{AnM}, we say that $R$ is {\it locally unital} if for all $n \in \N$ and all
$r_1,\ldots , r_n \in R$ there is an idempotent $e \in R$ such that for all $i \in \{1, \ldots , n\}$
the equalities $e r_i = r_i e = r_i$ hold.
Following Tominaga \cite{tominaga1976}, we say that $R$ is {\it s-unital} if for all $r \in R$ the relation $r \in Rr \cap rR$ holds.
The following chain of implications hold (see e.g. \cite{Ny2019}) for all rings
\begin{equation*}\label{implications}
{\small \mbox{unital} \Rightarrow \mbox{enough idempotents} \Rightarrow \mbox{locally unital} 
\Rightarrow \mbox{$s$-unital} \Rightarrow \mbox{idempotent.}}
\end{equation*}
Let $M$ denote a left $R$-module. By this we mean that $M$ is an additive group
equipped with a biadditive map $R \times M \ni (r,m) \mapsto rm \in M$.
In that case, we write ${}_R M$ to indicate this.
We say that ${}_R M$ is unitary if $RM = M$. 
Analogously, (unitary) right $R$-modules $M_R$ are defined.

If $M$ and $N$ are left (or right) $R$-modules, then we 
left $\Hom_R(M,N)$ denote the additive group of $R$-linear maps $M \to N$.
We denote by $\rmd$ ($\rmod$) the category having (unitary) left 
$R$-modules as objects and $R$-module homomorphisms as morphisms.
Analogously, the category $\mdr$ ($\modr$) of right (unitary)
modules is defined.
Let $S$ be another ring.
We say that an additive group $M$ is an (unitary) $R$-$S$-bimodule
if $M$ is simultaneously a (unitary) left $R$-module and a (unitary) 
right $S$-module which also is balanced, that is having the 
property that $(r m)s = r(m s)$ for $r \in R$, $m \in M$ and $s \in S$. 
We denote by $\rmds$ ($\rmods$) the category having (unitary) 
$R$-$S$-bimodules as objects and $R$-$S$-bimodule 
homomorphisms as morphisms.

Since we are considering modules over non-unital rings,
freeness in the usual sense, that is, that the module has a basis, seems too restrictive.
Instead, we say that a left $R$-module $M$ is {\it weakly free (of finite type)} if $M$ is isomorphic,
as a module, to a (finite) direct sum of copies of $R$.

\subsection{Graded rings}

Let $R = \bigoplus_{\si \in \G} R_{\si}$ be a ring graded by the groupoid $\G$. 
Then $R$ is said to be {\it object unital} if 
for all $e \in \G_0$ the ring $R_e$ is unital and for all $\si \in \G$ and all 
$r \in R_{\si}$ the equalities $1_{R_{r(\si)}} r = r 1_{R_{d(\si)}} = r$ hold.
The set $H(R)=\bigcup_{\si \in \G} R_\si$ is called the set of \emph{homogeneous elements of $R$}. 
If $r \in R_\si \setminus \{ 0 \}$, then we say that $r$ is of \emph {degree $\si$} and write $\deg(r)=\si$. 
Any $r \in R$ has a unique decomposition $r = \sum_{\si \in \G} r_{\si}$, where
$r_{\si} \in R_{\si}$, for $\si \in \G$, and all but finitely many of the $r_{\si}$ are zero.  

A non-empty subset $\mathcal{H}$ of $\G$ is called a {\it subgroupoid} of $\G$ if
(i) $h \in \mathcal{H}$ $\Rightarrow$ $h^{-1} \in \mathcal{H}$, and
(ii) $h_1,h_2 \in \mathcal{H} \cap \G_2$ $\Rightarrow$ $h_1 h_2 \in \mathcal{H}$.
In that case, $\mathcal{H}$ is called a {\it wide} subgroupoid of $\G$ if $\mathcal{H}_0 = \G_0$.

\begin{prop}[{\cite[Proposition 4]{CNP}}]\label{objectunitalnonzero}
If $R$ is object unital and we put 
\[
\mathcal{H} = \{ \si \in G \mid 1_{R_{r(\si)}} \neq 0 \ \mbox{and} \ 1_{R_{d(\si)}} \neq 0 \},
\] 
then $\mathcal{H}$ is a subgroupoid of $\G$ and $R = \bigoplus_{\si \in \mathcal{H}} R_{\si}$.
The subgroupoid $\mathcal{H}$ is wide if and only if for all $e \in \G_0$ the element $1_e$ is non-zero. 
\end{prop}

In light of Proposition \ref{objectunitalnonzero}, we will from now on make the following 
\begin{ass} 
If $R$ is object unital, then for all $e \in \G_0$, $1_{R_e} \neq 0$.
\end{ass}

There are many examples of object unital groupoid graded rings (see \cite{CNP}).
Here, we describe two large such classes.

\begin{ex}\label{firstexample}
Suppose that we are given a collection $A = ( A_e )_{e \in \G_0}$ of non-zero unital rings $A_e$
and put $A_0 = \bigoplus_{e \in \G_0} A_e$.
For all $e,f \in \G_0$ let $\iso_{e,f}(A)$ denote the set of ring isomorphisms $A_f \to A_e$ (respecting identity elements).
We let $\iso(A)$ denote the disjoint union $\biguplus_{e,f \in \G_0} \iso_{e,f}(A)$ and
we define a groupoid structure on $\iso(A)$ in the following way.
Take $e,e',f,f' \in \G_0$. The partial composition on $\iso(A)$
is defined to be the usual composition of functions
$\iso_{e,f}(A) \times \iso_{e',f'}(A) \to \iso_{e,f'}(A)$, when $f = e'$, and otherwise undefined.
By an {\it object crossed system} we mean a quadruple $( A , \G , \alpha , \beta )$ where
$\alpha : \G \to \iso(A)$ and $\beta : \G_2 \to U^{\rm gr}(A_0)$ are maps satisfying the 
following axioms for all $(\si,\ta,\rho) \in \G_3$ and all $a \in A_{d(\ta)}$
\begin{itemize}

\item $\alpha_{\si} : A_{d(\si)} \to A_{r(\si)}$ and $\alpha_e = {\rm id}_{A_e}$ for $e \in \G_0$;

\item $\beta_{\si , \ta} \in U( A_{r(\si)} )$ and $\beta_{\si , d(\si)} = \beta_{ r(\si) , \si } = 1_{A_{r(\si)}}$;

\item $\alpha_{\si} ( \alpha_{\ta} (a) ) = \beta_{\si,\ta} \alpha_{\si \ta} (a) \beta_{\si,\ta}^{-1}$;

\item $\beta_{\si,\ta} \beta_{\si\ta,\rho} = \alpha_{\si}( \beta_{\ta,\rho} ) \beta_{\si,\ta\rho}$.

\end{itemize}
The map $\alpha$ is called a {\it weak action} of $\G$ on $A$ and $\beta$ is called an $\alpha$-{\it cocycle}.
Suppose that $( A , \G , \alpha , \beta )$ is an object crossed system.
Let $\{ u_{\si} \}_{\si \in \G}$ be a copy of $\G$.
We let $A \rtimes^{\alpha}_{\beta} \G$ denote the set of formal sums of the form $\sum_{\si \in \G} a_\si u_\si$
where $a_{\si} \in A_{r(\si)}$, for $\si \in \G$, and $a_{\si} = 0$, for all but finitely many $\si \in \G$.
If $\sum_{\si \in \G} a_\si u_\si$ and $\sum_{\si \in \G} a_\si' u_\si$ are two such formal sums, then their sum is defined to be
$
\sum_{\si \in \G} a_\si u_\si + \sum_{\si \in \G} a_\si' u_\si = \sum_{\si \in \G} (a_\si + a_\si') u_\si.
$
The product of two such formal sums is defined to be the additive extension of the relations
$
a_\si u_\si \cdot a_{\ta}' u_{\ta} = a_\si \alpha_\si ( a_{\ta} ' ) \beta_{\si,\ta} u_{\si \ta},
$
when $(\si,\ta) \in \G_2$, and
$
a_\si u_\si \cdot a_{\ta}' u_{\ta} = 0,
$
otherwise. For all $\si \in \G$ we put $( A \rtimes^{\alpha}_{\beta} \G )_\si = A_{r(\si)} u_\si$.
With this grading, $A \rtimes^{\alpha}_{\beta} \G$ is an object unital $\G$-graded ring which
is called an {\it object crossed product} (for the details, see \cite[Proposition 16]{CNP}). 
In the special case when all of the rings $A_e$ coincide with the same ring $B$,
all $\beta_{\sigma,\tau} = 1_B$ and all $\alpha_{\sigma} = {\rm id}_B$, then 
the corresponding object crossed product equals $B[ \G ]$, the {\it groupoid ring} of $\G$ over $B$. 
\end{ex}

\begin{ex}\label{secondexample}
Let $A$ be a ring and suppose that $\alpha=(A_\sigma,\alpha_\sigma)_{\sigma \in \G}$ is a {\it unital partial action} of $\G$ in $A.$ 
Recall from \cite{bagio2012} that this means that for all $\sigma,\tau \in \G,$
\begin{itemize}

\item $A_\sigma$ is an ideal of $A_{r(\sigma)}$ and $A_{r(\sigma)}$ is an ideal of $A,$;

\item there exists a central idempotent $1_\sigma$ of $A$ such that $A_\sigma=A1_\sigma,$; 

\item $\alpha_\sigma: A_{\sigma\m}\to A_\sigma$ is a ring isomorphism;

\item $\alpha_{\sigma\tau}$ is an extension of $\alpha_\sigma \circ \alpha_\tau,$ provided that $(\sigma, \tau)\in \G_2.$.

\end{itemize} 
The corresponding {\it partial skew groupoid ring} 
$A \star_\af \G$ is defined to be the set of formal sums of the form 
$\sum_{g\in \G}a_g\delta_g$, where $a_g\in A_g$ and $a_g = 0$ for all but finitely many $\sigma \in \G$.
If $\sum_{\si \in \G} a_\si \delta_\si$ and $\sum_{\si \in \G} a_\si' \delta_\si$ are two such formal sums, 
then their sum is defined to be
$$
\sum_{\si \in \G} a_\si \delta_\si + \sum_{\si \in \G} a_\si' \delta_\si = \sum_{\si \in \G} (a_\si + a_\si') \delta_\si.
$$
The product of two such formal sums is defined to be the additive extension of the relations
$
a_\si \delta_\si \cdot a_{\ta}' \delta_{\ta} = a_\sigma \alpha_\sigma( a_\tau' 1_{\sigma^{-1}}) \delta_{\sigma \tau},
$
when $(\si,\ta) \in \G_2$, and
$
a_\si \delta_\si \cdot a_{\ta}' \delta_{\ta} = 0,
$
otherwise. For all $\si \in \G$ we put $( A \star_\af \G )_\si = A_\sigma \delta_\si$.
With this grading $A \star_\af \G$ is an object unital $\G$-graded ring with
$1_{ (A \star_\af \G)_e } = 1_e \delta_e$, for $e \in \G_0$.
\end{ex}

\subsection{Graded modules}

Let $M=\bigoplus_{\si \in \G} M_\si$ be a $\G$-graded left $R$-module. 
If $N=\bigoplus_{\si \in \G} N_\si$ is another left $R$-module graded by $\G$, then a left $R$-module homomorphism
$f : M \to N$ is said to be \emph{graded} if for all $\si \in \G$ the inclusion $f(M_{\si}) \subseteq N_{\si}$ holds.
We let $\Hom_{\GR}(M,N)$ denote the set of graded $R$-module homomorphisms $M \to N$.
The collection of (unitary) $\G$-graded left $R$-modules and the collection of graded homomorphisms
together form a category which we denote by $\grmd$ ($\grmod$).
Analogously, the category $\gmdr$ ($\gmodr$) of (unitary) 
$\G$-graded right $R$-modules is defined.
If $S$ is another $\G$-graded ring, then we say that an
$R$-$S$-bimodule is graded if it is graded both as a left $R$-module
and as a right $S$-module.
The collection of (unitary) $\G$-graded $R$-$S$-modules and the 
collection of graded homomorphisms
together form a category which we denote by $\grmds$ ($\grmods$).
The set $H(M) = \bigcup_{\sigma \in \G} M_\sigma$ is called the set of \emph{homogeneous elements of $M$}.  
If $m \in M_\sigma$ is a non-zero element, 
then we say that $m$ is of \emph {degree $\sigma$} and write $\deg(m)=\sigma$. 
Any $m \in M$ has a unique decomposition $m = \sum_{\sigma \in \G} m_{\sigma}$, where
$m_{\sigma} \in M_{\sigma}$, for $\sigma \in \G$, and all but a finite number of the $m_{\sigma}$ are zero.  
If $N$ is an $R$-submodule of $M$, then it is called a graded submodule if 
$N = \oplus_{\sigma \in \G} (N \cap M_{\sigma})$. 
In the sequel, we will make repeated use of the following

\begin{prop}\label{unitarymodule}
If $R$ is object unital and $M \in \grmd$ ($M \in \gmdr$), 
then $M \in \grmod$ ($M \in \gmodr$) 
if and only if for all $\sigma \in \G$
and all $m_\sigma \in M_\sigma$, the equality 
$1_{R_{r(\sigma)}}m_\sigma = m_\sigma$ ($m_\sigma = m_\sigma 1_{R_{d(\sigma)}})$ holds.
\end{prop}

\begin{proof}
First we show the ``if'' part. 
Suppose that for all $\sigma \in \G$
and all $m_\sigma \in M_\sigma$, the equality 
$1_{R_{r(\sigma)}}m_\sigma = m_\sigma$ ($m_\sigma = m_\sigma 1_{R_{d(\sigma)}})$ holds.
Take $m \in M$. Then there is $n \in \mathbb{N}$,
$\sigma_1,\ldots,\sigma_n \in \G$ and 
$m_{\sigma_i} \in M_{\sigma_i}$, for $i = 1,\ldots,n$,
such that $m = \sum_{i=1}^n m_{\sigma_i}$.
Let $T = \{ r(\sigma_i) \mid i \in \{ 1,\ldots,n \} \}$
and put $u = \sum_{t \in T} 1_{R_t}$. Then, clearly, $u m = m$,
and hence $M$ is unitary.

Now we show the ``only if'' part.
We will show the part about $\grmod$. The part about $\gmodr$ is analogous and is left to the reader.
Let $M$ be a module in $\grmod$. Take $\sigma \in \G$ and $m_\sigma \in M_\sigma$.
Since $M$ is unitary, it follows that there exist $n \in \mathbb{N}$, $\tau_i , \rho_i \in \G$,
$r_i \in R_{\tau_i}$ and $m_i \in M_{\rho_i}$, for $i = 1,\ldots,n$, such that
$m_\sigma = \sum_{i=1}^n r_i m_i$ and $\tau_i \rho_i = \sigma$, for $i = 1,\ldots,n$.
Since $r(\sigma) = r(\tau_i \rho_i) = r(\tau_i)$, for $i = 1,\ldots,n$, we get that
$1_{R_{r(\sigma)}} m = \sum_{i=1}^n 1_{R_{r(\tau_i)}} r_i m_i = \sum_{i=1}^n r_i m_i = m$.
\end{proof}

\begin{cor}
If $R$ is a unital ring and $M \in \rmd$ ($M \in \mdr$), 
then $M \in \rmod$ ($M \in \modr$) if and only if for all
$m \in M$ the equality $1_R m = m$ ($m 1_R = m$) holds.
\end{cor}

\begin{proof}
This follows immediately from Proposition \ref{unitarymodule} upon letting $\G$ be the trivial group.
\end{proof}

\subsection{Graded abelian groups}

An additive group $A$ is said to be $\G$-{\it graded} if there for all $\sigma \in \G$
is an additive subgroup $A_\sigma$ of $A$ such that $A = \bigoplus_{\sigma \in \G} A_\sigma$ as additive groups.
If $B$ is another $\G$-graded additive group, then a group homomorphism
$f : A \to B$ is said to be \emph{graded} if for all $\si \in \G$ the inclusion $f(A_{\si}) \subseteq B_{\si}$ holds.
The collection of $\G$-graded additive groups and the collection of graded homomorphisms
together form an abelian category which we denote by $\AbG.$ 
Groups of this type can always, in a natural way, be
viewed as graded left ${\Bbb Z}[\G_0]$-modules, 
since ${\Bbb Z}[\G_0]$ is an object unital graded subring of ${\Bbb Z}[\G]$.  
We call this the {\it trivial grading} of the objects in $\AbG$.

\section{Graded homomorphisms}\label{sec:homomorphisms}

Throughout this section, $R$ denotes a $\G$-graded ring.

\begin{defi}
If $M \in \grmd$ and $\sigma \in \G$, 
then the {\it $\sigma$-suspension} of $M$ is the graded 
additive subgroup $M(\sigma)$ of $M$ defined in the following way.
If $\tau \in \G$, then put:
\[
M(\sigma)_\tau = 
\left\{
\begin{array}{ccc}
M_{\tau \sigma} & \mbox{if} & (\tau,\sigma) \in \G_2 \\
\{ 0 \}         & \mbox{if} & (\tau,\sigma) \notin \G_2.
\end{array}
\right.
\]
\end{defi}

\begin{prop}\label{prop:suspensionunitary}
If $R$ is a (object unital) $\G$-graded ring, 
$M \in \grmd$ ($M \in \grmod$),
then, with the induced left action by $R$, for all $\sigma \in \G$, 
$M(\sigma) \in \grmd$ ($M(\sigma) \in \grmod$). 
\end{prop}

\begin{proof}
First we show that $M \in \grmd$. To this end,
take $\sigma,\tau,\rho \in \G$. We consider two cases.

{ \bf Case 1}:  Suppose $(\rho,\tau) \in \G_2$.\\
{\bf Case 1.1}  $(\tau,\sigma) \in \G_2$.
Then $(\rho\tau,\sigma) \in \G_2$ and so 
$R_\rho M(\sigma)_\tau =
R_\rho M_{\tau\sigma} \subseteq 
M_{\rho \tau \sigma} = 
M(\sigma)_{\rho \tau}$.\\
{\bf Case 1.2}  $(\tau,\sigma) \notin \G_2$.
Then $(\rho\tau,\sigma) \notin \G_2$ and so 
$R_\rho M(\sigma)_\tau =
R_\rho \{ 0 \} =
\{ 0 \} = 
M(\sigma)_{\rho \tau}$.

{\bf Case 2}: Let $(\rho,\tau) \notin \G_2$.\\
{\bf Case 2.1} : $(\tau,\sigma) \in \G_2$.
Then $(\rho,\tau\sigma) \notin \G_2$ and so
$R_\rho M(\sigma)_\tau = 
R_\rho M_{\tau \sigma} = \{ 0 \}$.
{\bf Case 2.2} : $(\tau,\sigma) \notin \G_2$. Then
$R_\rho M(\sigma)_\tau =
R_\rho \{ 0 \} =
\{ 0 \}$.

Now suppose  $R$ object unital and $M \in \grmod$.
Take $\sigma,\tau \in \G$ and $m \in M(\sigma)_\tau$. We wish to show that $1_{R_{r(\tau)}} m  = m$.  If 
$(\tau,\sigma) \in \G_2$. Then $m \in M_{\tau\sigma}$ and therefore $1_{R_{r(\tau)}} m  = m$. 
If $(\sigma,\tau) \notin \G_2$. Then $m = 0$ and so $1_{R_{r(\tau)}} m   = 0 = m$. 
The claim now follows from Proposition \ref{unitarymodule}.
\end{proof}

\begin{prop}\label{composition}
If $M \in \grmd$ and $\sigma,\tau \in \G$, then,
as $\G$-graded additive groups, we have that:
\[
M(\tau)(\sigma) = 
\left\{
\begin{array}{ccc}
M(\sigma\tau) & \mbox{if} & (\sigma,\tau) \in \G_2 \\
\{ 0 \}       & \mbox{if} & (\sigma,\tau) \notin \G_2.
\end{array}
\right.
\]
\end{prop}

\begin{proof}
Take $\sigma,\tau,\rho \in \G$.

{\bf Case 1}: $(\sigma,\tau) \in \G_2$. \\
{\bf Case 1.1}: $(\rho,\sigma) \in \G_2$. Then 
$M(\tau)(\sigma)_\rho = M(\tau)_{\rho\sigma} = M_{\rho\sigma\tau} = M(\sigma\tau)_\rho$.\\
{\bf Case 1.2}: $(\rho,\sigma) \notin \G_2$. Then $(\rho,\sigma\tau) \notin \G_2$ and hence 
$M(\tau)(\sigma)_\rho = \{ 0 \} = M(\sigma\tau)_\rho$.

{\bf Case 2}: $(\sigma,\tau) \notin \G_2$. \\
{\bf Case 2.1}: $(\rho,\sigma) \in \G_2$. Since $(\rho\sigma,\tau) \notin \G_2$, we get that 
$M(\tau)(\sigma)_\rho = M(\tau)_{\rho\sigma} = \{ 0 \}$.\\
{\bf Case 2.2}: $(\rho,\sigma) \notin \G_2$. Then $M(\tau)(\sigma)_\rho = \{ 0 \}$.
\end{proof}

\begin{prop}\label{prop:Ggroup}
If $M \in \grmd$ and $\sigma \in \G$, 
then,  the following assertions hold. 
\begin{itemize} 
\item[(a)] As additive groups, we have that $M(\sigma) = M( d(\sigma) )$.
 \item[(b)] As objects in $\grmd$ the decomposition 
$M = \oplus_{e \in \G_0} M(e)$ holds.
\item[(c)] If $\G$ is a group, then for all $\sigma \in \G$ the equality $M(\sigma) = M$ holds.

\end{itemize}
\end{prop}

\begin{proof}
(a) Take $\tau \in \G$ with $(\tau,\sigma) \in \G_2$.
Then $M(\sigma)_\tau = M(\tau\sigma) = M( \tau\sigma d(\sigma) ) = M( d(\sigma) )_{\tau\sigma}$.
Therefore 
\[
M(\sigma) = \bigoplus_{\tau \in \G, \ (\tau,\sigma) \in \G_2} M(\sigma)_{\tau} = 
\bigoplus_{\tau \in \G, \ (\tau,\sigma) \in \G_2} M(d(\sigma))_{\tau\sigma} \subseteq M(d(\sigma)).
\]
Therefore $M(\sigma) \subseteq M(d(\sigma))$.
Now we show the reversed inclusion. Take $\rho \in \G,$ it is enough to check that $M(d(\sigma))_\rho\subseteq M(\sigma).$ Suppose  $d(\rho)=d(\sigma),$
then $M(d(\sigma))_\rho = M_{\rho d(\sigma)} =
 M_{\rho \sigma^{-1} \sigma} = M(\sigma)_{\rho \sigma^{-1}} \subseteq M(\sigma),$ as desired.

(b) We have
\[ 
M = 
\bigoplus_{\sigma \in \G} M_\sigma = 
\bigoplus_{e \in \G_0} \bigoplus_{\sigma \in \G, d(\sigma)=e} M_\sigma \stackrel{{\rm (a)}}= 
\bigoplus_{e \in \G_0} M(e)
\]
as additive groups. 

(b) This follows from Proposition (a) and (b).
\end{proof}

\begin{defi}\label{def:semigraded}
Suppose that $M,N \in \grmd$.
If $f \in \Hom_R(M,N)$ and $\sigma \in \G$, then we say that 
$f$ is a map of degree $\sigma$ if for all $\tau \in \G$ we have
$f(M_\tau) \subseteq N(\sigma)_\tau.$
We put 
\[
\begin{array}{rcl}
\HOM_R(M,N)_\sigma & = & \{ f \in \Hom_R(M,N) \mid \mbox{$f$ is of degree $\sigma$} \} \\
\HOM_R(M,N) & = & \bigoplus_{\sigma \in \G} \HOM_R(M,N)_{\sigma} \\
\END_R(M)   & = & \HOM_R(M,M).
\end{array}
\]
The elements of $\HOM_R(M,N)$
will from now on be called {\it semi-graded maps}.
\end{defi}

\begin{prop}\label{prophom}
If $M,N,P \in \grmd$ and $\sigma,\tau \in \G$, then:
\begin{itemize}

\item[(a)] ${\rm HOM}_R(M,N)_\sigma = {\rm Hom}_{\GR}(M,N(\sigma))$;

\item[(b)] as additive groups ${\rm Hom}_{\GR}(M(\sigma^{-1}),N)$ is a direct summand in ${\rm HOM}_R(M,N)_\sigma$;

\item[(b)] ${\rm HOM}_R(N,P)_\sigma \circ {\rm HOM}_R(M,N)_\tau \subseteq {\rm HOM}_R(M,P)_{\sigma\tau}$,
if $(\sigma,\tau) \in \G_2$, and ${\rm HOM}_R(N,P)_\sigma \circ {\rm HOM}_R(M,N)_\tau = \{ 0 \}$, otherwise.

\item[(c)] $\Hom_{\GR}(M,N) \supseteq \bigoplus_{e \in \G_0} {\rm HOM}_R(M,N)_e$
with equality if $\G_0$ is finite;

\item[(d)] with the grading ${\rm END}_R(M) = \bigoplus_{\sigma \in \G} {\rm HOM}_R(M,M)_{\sigma}$, ${\rm END}_R(M)$ is a $\G$-graded ring.

\end{itemize}
\end{prop}

\begin{proof}
(a) This follows immediately from the definition of the suspension.

(b) Define the additive map 
\[
p : \Hom_{\GR}(M,N(\sigma)) \to \Hom_{\GR}(M(\sigma^{-1}),N)
\]
by restriction. Let us show that $p$ is well defined.
Take $f \in \Hom_{\GR}(M,N(\sigma))$ and $\tau \in \G$.

\noindent {\bf Case 1}: $d(\sigma) = d(\tau)$. Then 
$f( M(\sigma^{-1})_\tau ) = 
f( M_{\tau \sigma^{-1}} ) \subseteq
M_{\tau \sigma^{-1} \sigma} = M_\tau$.\\
{\bf Case 2: } $d(\sigma) \neq d(\tau)$. Then
$f( M(\sigma^{-1})_\tau ) = 
f( \{ 0 \} ) = \{ 0 \} \subseteq M_\tau$.

Thus $p(f) \in \Hom_{\GR}(M(\sigma^{-1}),N)$.
Define the additive map
\[
i : \Hom_{\GR}(M(\sigma^{-1}),N) \to \Hom_{\GR}(M,N(\sigma))
\]
in the following way. Given $g \in \Hom_{\GR}(M(\sigma^{-1}),N)$ and $\tau \in \G$,
we put $i(g)|_{M(\sigma^{-1})} = g$ and $i(g)|_{M_{\tau}} = \{ 0 \}$, if $r(\sigma) \neq d(\tau)$.
Now we show that $i$ is well defined.

\noindent {\bf Case 1}: $r(\sigma) = d(\tau)$. Then
$i(g)( M_\tau ) = 
i(g)( M_{\tau \sigma \sigma^{-1}} ) =
i(g)( M(\sigma^{-1} )_{\tau\sigma} ) =
g( M(\sigma^{-1} )_{\tau\sigma} ) \subseteq 
N_{\tau\sigma}= N(\sigma)_\tau.$ \\
{\bf Case 2: }$r(\sigma) \neq d(\tau)$. Then
$i(g)(M_\tau) = \{ 0 \} \subseteq N(\sigma)_\tau$.
It is clear that $p \circ i = \id_{\Hom_{\GR}(M(\sigma^{-1}),N)}$.

(c) Take $f \in \HOM_R(N,P)_\sigma$ and $g \in \HOM_R(M,N)_\tau$.
Take $\rho \in \G$. 
Then we get that
$(f \circ g)(M_\rho) = 
f( g (M_\rho) ) \subseteq
f( M(\tau)_\rho ) \subseteq 
N(\tau)(\sigma)_\rho$. 

\noindent {\bf Case 1}:$(\sigma,\tau) \in \G_2$.  Now Proposition \ref{composition} gives 
$(f \circ g)(M_\rho) \subseteq
N(\sigma\tau)_\rho$. Thus $f \circ g \in \HOM_R(M,P)_{\sigma\tau}$.\\
{\bf Case 2: }$(\sigma,\tau) \notin \G_2$. Proposition \ref{composition} again implies that
$(f \circ g)(M_\rho) = \{ 0 \}$. Thus $f \circ g = 0$.

(d) From (a) we get that 
\[
\bigoplus_{e \in \G_0} \HOM_R(M,N)_e = 
\bigoplus_{e \in \G_0} \HOM_{\GR} (M,N(e)) \subseteq
\Hom_{\GR}(M,N).
\]
Suppose that $\G_0$ is finite. For all $e \in \G_0$ let 
$
p_e : N \to N(e) 
$
be the projection.
Then $p_e \in \Hom_{\GR}(N,N(e))$ and $\sum_{e \in \G_0} p_e = \id_N$.
Take $f \in \Hom_{\GR}(M,N)$. Then, for all $e \in \G_0$, $p_e \circ f \in \Hom_{\GR}(M,N(e))$,
and thus 
$
f = \id_N \circ f = \sum\limits_{e \in \G_0} p_e \circ f \in \bigoplus\limits_{e \in \G_0} \Hom_{\GR}(M,N(e)).
$
\end{proof}

\begin{rem}
If we let $\G$ be a group in Proposition \ref{prophom}, then 
clearly the kernel of the map $p$ equals zero. Thus, we retrieve the equality 
\[
\Hom_{\GR}(M,N(\sigma)) = \Hom_{\GR}(M(\sigma^{-1}),N)
\]
from \cite[p. 25]{nastasescu2004}.
\end{rem}

\begin{prop}\label{prop:finite}
Suppose that $M,N \in \grmd$. 
\begin{itemize}

\item[(a)] The inclusion ${\rm HOM}_R(M,N) \subseteq {\rm Hom}_R(M,N)$ holds.

\item[(b)] If $R$ is object unital and $M,N \in \grmod$, then 
\[
{\rm HOM}_R(M,N) = {\rm Hom}_R(M,N)
\]
holds if $\G$ is finite or $M$ is finitely generated as an object in 
$\rmod$.
\end{itemize}
\end{prop}

\begin{proof}
(a) This is trivial. Now we show (b).
Suppose first that $\G$ is finite.
Take $f \in \Hom_R(M,N)$ and $\sigma \in \G$.
From Proposition \ref{unitarymodule}, it follows that $1_{r(\sigma)} M_\sigma = M_\sigma$.
Hence $f(M_\sigma) = f( 1_{r(\sigma)} M_\sigma ) = 1_{r(\sigma)} f( M_\sigma )$.
Thus $$f(M_\sigma) \subseteq 1_{r(\sigma)} N = \sum_{ \stackrel{\alpha \in \G}{r(\alpha)=r(\sigma)}} N_\alpha =
\sum_{\stackrel{\alpha \in \G}{r(\alpha)=r(\sigma)}} N_{\sigma \sigma^{-1} \alpha} =
\sum_{\stackrel{\tau \in \G}{r(\tau)=d(\sigma)}} N_{ \sigma \tau }.$$ 
Take $\tau \in \G$. Now we define $f_\tau \in \HOM_R(M,N)_\tau$ in the following way.
Take $\sigma \in \G$ and $m_\sigma \in M_\sigma$. 
If $r(\tau) \neq d(\sigma)$, then put $f_\tau(m_\sigma)=0$.
If $r(\tau)=d(\sigma)$, then let $f_\tau(m_\sigma)$ be the component of $f(m_\sigma)$ of degree $\sigma\tau$.
It is clear that $f = \sum_{\tau \in \G} f_\tau \in \HOM_R(M,N)$. 

Now suppose that $M$ is finitely generated as an object in $\rmod$
Take $p \in \mathbb{N}$, $\sigma_1 , \ldots , \sigma_p \in \G$ and
non-zero $m_i \in M_{\sigma_i}$, for $i =1,\ldots,p$, such that $M = \sum_{i=1}^p R m_i$.
Take $i \in \{ 1,\ldots,p \}$. There is a finite subset $S_i$ of $\G$ such that 
$f(m_i) \in \sum_{\sigma \in S_i} N_\sigma$.
From Proposition \ref{unitarymodule}, it follows that $1_{r(\sigma_i)} m_i = m_i$. 
Thus $f(m_i)  = 1_{r(\sigma_i)} f(m_i)$ and  
we can therefore assume that $r(\sigma_i) = r(\sigma)$ for all $\sigma \in S_i$.
Put $T_i = \sigma_i^{-1} S_i$. This is a well defined set since
$d( \sigma_i^{-1} ) = r(\sigma_i) = r(\sigma)$ for $\sigma \in S_i$.
Let $T = \bigcup_{i=1}^p T_i$.
Take $\sigma \in \G$. Then $M_\sigma = \left( \sum\limits_{i=1}^p R m_i \right)_\sigma = 
\sum\limits_{i, d(\sigma_i)=d(\sigma) } R_{\sigma \sigma_i^{-1}} m_i$.
Write $f(m_i) = \sum\limits_{j=1}^{q_{ij}} n_{ij}$ for some $n_{ij} \in N_{\alpha_{ij}}$
where $\alpha_{ij} \in \G$. Therefore $f( M_\sigma ) = 
\sum\limits_{i, \ d(\sigma_i)=d(\sigma) } R_{\sigma \sigma_i^{-1}} f(m_i) = 
\sum\limits_{i, \ d(\sigma_i)=d(\sigma) } \sum\limits_{j=1}^{q_{ij}} R_{\sigma \sigma_i^{-1}} n_{ij}$.
Put $S_\sigma = \{ \sigma_i^{-1} \alpha_{ij} \mid d(\sigma)=d(\sigma_i), \ r(\sigma_i)=r(\alpha_{ij}) \}.$
The above calculation shows that $f( M_\sigma ) \subseteq \sum_{\tau \in S_\sigma} N_{\sigma \tau}$.
Take $\tau \in \G$. Now we define $f_\tau \in \HOM_R(M,N)_\tau$ in the following way.
Take $\sigma \in \G$ and $m_\sigma \in M_\sigma$. 
If $\tau \notin S_\sigma$, then put $f_\tau(m_\sigma)=0$.
If $\tau \in S_\sigma$, then let $f_\tau(m_\sigma)$ be the component of $f(m_\sigma)$ of degree $\sigma\tau$
in $\sum_{\tau \in S_\sigma} N_{\sigma \tau}$.
It is clear that $f = \sum_{\tau \in \G} f_\tau \in \HOM_R(M,N)$. 
\end{proof}

\begin{rem}
If $\G$ is infinite or if $M$ is not finitely generated, then the 
equality $\HOM_R(M,N) = \Hom_R(M,N)$ does not always hold
(see \cite[p. 11]{N82} for a counterexample in the case when $\G$ is a group). 
\end{rem}

Now we gather groupoid graded versions of some results concerning 
bimodule actions on groups of homomorphisms (see e.g. \cite[p. 78]{R79}
for the ungraded situation).

\begin{prop}\label{propactions}
Suppose that $R$ and $S$ are $\G$-graded rings.
\begin{itemize}

\item[{\rm (a)}] If $M \in \grmds$ (with $S$ object unital and
$M \in \gmods$) and $N \in \grmd$,
then ${\rm HOM}_R(M,N) \in \gsmd$ 
(${\rm HOM}_R(M,N) \in \gsmod$),
where  $$(s f)(m) = f(ms)$$, for $s \in S$, 
$f \in {\rm HOM}_R(M,N)$ and $m \in M$.

\item[{\rm (b)}] If $M \in \grmds$
(with $R$ object unital and $M \in \grmod$)  
and $N \in \gmds$,
then ${\rm HOM}_S(M,N) \in \gmdr$ 
(${\rm HOM}_S(M,N) \in \gmodr$), 
where $$(f r)(m) = f(rm),$$ 
for $f \in {\rm HOM}_R(M,N)$, $r \in R$ and $m \in M$.

\item[{\rm (c)}] If
$M \in \gmds$ and 
$N \in \gmds$ 
(with $R$ object unital and $N \in \grmod$),
then ${\rm HOM}_S(M,N) \in \grmd$ (${\rm HOM}_S(M,N) \in \grmod$), 
where $$(r  f)(m) = rf(m),$$ for $r \in R$, 
$f \in{\rm HOM}_R(M,N)$, and $m \in M$.

\item[{\rm (d)}] If
$M \in \grmd$ and 
$N \in \grmds$ 
(with $S$ object unital and $N \in \gmods$),
then ${\rm HOM}_R(M,N) \in \gmds$ (${\rm HOM}_R(M,N) \in \gmods$),
where $$(f s)(m) = f(m)s,$$ for $f \in{\rm HOM}_R(M,N)$, 
$s \in S$ and $m \in M$.
\end{itemize}
\end{prop}

\begin{proof}
We only show (a). The rest of the statements are shown in a 
similar fashion and are therefore left to the reader.
It is clear that the action of $S$ defines a left $S$-module
structure on $\HOM_R(M,N)$. What is left to check are the statements
concerning the grading. To this end,
take $\sigma,\tau,\rho \in \G$ and 
$f_\tau \in \HOM_R(M,N)_\tau$. 

\noindent {\bf Case 1}: $(\sigma,\tau) \in \G_2$.\\
{\bf Case 1.1}: $(\rho,\sigma) \in \G_2$.
Then $(S_\sigma f_\tau)(M_\rho) = 
f_\tau(M_\rho S_\sigma) \subseteq 
f_\tau( M_{\rho\sigma}) \subseteq 
M(\tau)_{\rho\sigma} = 
M(\tau)(\sigma)_\rho =
M(\sigma\tau)_\rho$, by Proposition \ref{composition}.\\
{\bf Case 1.2}: $(\rho,\sigma) \notin \G_2$.
Then 
$(S_\sigma f_\tau)(M_\rho) = 
f_\tau(M_\rho S_\sigma) = 
\{ 0 \} = 
M(\sigma\tau)_\rho$. 
From Proposition \ref{prophom}(a) it now follows that
$S_\sigma f_\tau \subseteq \HOM_R(M,N)_{\sigma\tau}$.

\noindent {\bf Case 2}: $(\sigma,\tau) \notin \G_2$.
\\
{\bf Case 2.1}:$(\rho,\sigma) \in \G_2$.
Then $(S_\sigma f_\tau)(M_\rho) = 
f_\tau(M_\rho S_\sigma) \subseteq 
f_\tau( M_{\rho\sigma}) \subseteq 
M(\tau)_{\rho\sigma} = \{ 0 \}$.
\\
{\bf Case 2.2}:$(\rho,\sigma) \notin \G_2$.
Then 
$(S_\sigma f_\tau)(M_\rho) = 
f_\tau(M_\rho S_\sigma) = 
\{ 0 \}$. 
Therefore, $S_\sigma f_\tau = \{ 0 \}$.

Now suppose that $S$ is object unital and that 
$M_S$ is unitary. We wish to use Proposition \ref{unitarymodule} 
to show that $\HOM_R(M,N)$ is unitary as a left $S$-module.
Take $m_\rho \in M_\rho$. 
Case 1: $(\rho,\tau) \in \G_2$. Then
$( 1_{R_{r(\tau)}} f_\tau ){m_\rho} = 
f_\tau( m_\rho 1_{R_{r(\tau)}} ) =
f_\tau(m_\rho)$.
Case 2: $(\rho,\tau) \notin \G_2$. Then
$( 1_{R_{r(\tau)}} f_\tau ){m_\rho} = 
f_\tau( m_\rho 1_{R_{r(\tau)}} ) =
f_\tau( 0 ) = 0 = f_\tau(m_\rho)$.
Therefore $1_{R_{r(\tau)}} f_\tau = f_\tau$.
\end{proof}

\begin{prop}\label{propalpha}
Suppose that $R$ is an object unital ring and $M \in \grmod$. 
If we equip ${\rm HOM}_R(R,M)$ with the $\grmod$
structure defined in Proposition \ref{propactions}(a), then the map 
$
\alpha : M \to {\rm HOM}_R(R,M), 
$
defined by $\alpha(m)(r) = rm$, for $m \in M$ and $r \in R$,
is an isomorphism in $\grmod$.
\end{prop}

\begin{proof}
Clearly $\alpha$ is an $R$-linear graded map.
Define a map 
\[
\beta : \HOM_R(R,M) \to M
\]
in the following way.
Take $f \in \HOM_R(R,M)$.
Take $\sigma_1,\ldots,\sigma_n \in \G$
such that $f = \sum_{i=1}^n f_{\sigma_i}$ and
$f_{\sigma_i} \in \HOM_R(R,M)_{\sigma_i}$ for $i = 1,\ldots,n$.
Put $\beta( f ) = \sum_{i=1}^n f_{\sigma_i}( 1_{r(\sigma_i)} )$.
Clearly $\beta$ is additive.
Also, since 
$f_{\sigma_i}( 1_{r(\sigma_i)} ) \in M_{ r(\sigma_i) \sigma_i } =
M_{\sigma_i}$, we get that $\beta$ is graded.
Now we show that $\beta$ respects left multiplication by 
homogeneous elements from $R$. To this end, take $\tau,\sigma \in G$,
$r_\tau \in R_\tau$ and $f_\sigma \in HOM_R(R,M)_\sigma$.

 {\bf Case 1}: : $d(\tau) \neq r(\sigma)$.
Then $\beta( r_\tau \cdot f_\sigma ) = 
\beta( 0 ) = 
0 =
r_\tau f_\sigma( 1_{r(\sigma)} ) = r_\tau \beta( f_\sigma )$.

{\bf Case 2}: $d(\tau) = r(\sigma)$.
Then $\beta( r_\tau \cdot f_\sigma ) =
\beta( f_\sigma(  r_\tau )) = 
f_\sigma( 1_{r(\tau\sigma)} r_\tau ) =
f_\sigma( 1_{r(\tau)} r_\tau ) =
f_\sigma( r_\tau ) =
f_\sigma( r_\tau 1_{d(\tau)} ) =
f_\sigma( r_\tau 1_{r(\sigma)} ) =
r_\tau f_\sigma( 1_{r(\sigma)} ) =
r_\tau \beta(f_\sigma)$.

Finally we show that $\beta \circ \alpha = {\rm id}_M$
and $\alpha \circ \beta = {\rm id}_{{\rm HOM}_R(R,M)}$.
Take $\sigma \in \G$ and $m_\sigma \in M_\sigma$. Then 
$\beta( \alpha(m_\sigma) ) = 
\alpha(m_\sigma)(1_{r(\sigma)} )=
1_{r(\sigma)} m_\sigma =
m_\sigma$.  
Next, take $(\tau,\sigma) \in \G_2$, $f_\sigma \in {\rm HOM}_R(R,M)_\sigma$
and $r_\tau \in R_\tau$. Then
$\alpha( \beta( f_\sigma ) )(r_\tau) =
\alpha( f_\sigma( 1_{r(\sigma)} ) )(r_\tau) =
r_\tau f_\sigma( 1_{r(\sigma)} ) =
f_\sigma( r_\tau 1_{r(\sigma)} ) =
f_\sigma( r_\tau 1_{d(\tau)} ) =
f_\sigma( r_\tau )$.
\end{proof}

\begin{rem}\label{remalpha}
Suppose that $R$ is an object unital $\G$-graded ring and 
$M \in \gmodr$. 
If we equip $\HOM_R(R,M)$ with the $\gmodr$
structure defined in 
Proposition \ref{propactions}(b), then, in a fashion similar
to the proof of Proposition \ref{propalpha}, one can
show that the map 
$
\alpha' : M \to HOM_R(R,M), 
$
defined by $\alpha'(m)(r) = mr$, for $m \in M$ and $r \in R$,
is an isomorphism in $\gmodr$.
\end{rem}

\begin{cor}\label{coralpha}
If $R$ is an object unital $\G$-graded ring, then the maps
\[
\alpha : R \to \END_R(R)
\]
and
\[
\alpha' : R \to \END_R(R),
\]
from Proposition \ref{propalpha} and Remark \ref{remalpha}
are isomorphisms in $\grmodr$.
\end{cor}

The proofs of the following two proposition is 
similar to the ungraded case (found e.g. in \cite{R79}). 

\begin{prop}\label{prop:directsum} The following assertions hold.
\begin{itemize}
\item [(a)]
If  $M$ , $\{N_i \}_{i \in I}, P \in \grmd$,
then the isomorphism 
\[
{\rm HOM}_R(\oplus_{i \in I} N_i , M) \cong \oplus_{i \in I} {\rm HOM}_R(N_i,M)
\] holds in $\AbG$.
\item [(b)]
If 
$
M \rightarrow N \rightarrow P \rightarrow 0
$
is an exact sequence in $\grmd$,
then the induced sequence in $\AbG$:
\[
0 \rightarrow {\rm HOM}_R(P,Q) \rightarrow {\rm HOM}_R(N,Q) \rightarrow {\rm HOM}_R(M,Q)
\]
is exact.
\end{itemize}

\end{prop}

\begin{defi}
If $M$ is a $\G$-graded right $R$-module and $N$ is a $\G$-graded left
$R$-module, then we may consider $M \otimes_R N$ as an object in
$\AbG$, where the grading is defined by letting 
$(M \otimes_R N)_{\sigma}$, $\sigma \in \Gamma$, be the ${\Bbb Z}$-module 
generated by all $m_{\tau} \otimes n_{\rho}$, $d(\tau)
= r(\rho)$, $\tau \rho = \sigma$, $m_{\tau} \in M_{\tau}$,
$n_{\rho} \in N_{\rho}$. To see that this is well defined, note
that $M \otimes_{R} N=M \otimes_{\Bbb Z} N / L$ where $L$ is the graded subgroup
of $M \otimes_{\Bbb Z} N$ generated by the elements of the form $mr
\otimes n - m \otimes rn$. The grading on $M \otimes_R N$ is
therefore induced by the grading on $M \otimes_{\Bbb Z} N$.
If $S$ is another $\G$-graded ring and $N$ is a
$\G$-graded $R$-$S$-bimodule, then if we 
consider $M \otimes_R N$ with it's right $S$-module structure
it is a $\G$-graded right $S$-module.
\end{defi}

The usual relation between $\Hom$ and $\otimes$ carry over
to the $\G$-graded situation:

\begin{prop}\label{adjunction}
If $R$ and $S$ are $\G$-graded rings,
$M \in \gmdr$, 
$N \in \grmds$ 
and $P \in \gmds$, 
then the map
\[
\varphi : HOM_S( M \otimes_R N , P) \to HOM_R(M , HOM_S(N , P))
\]
defined by $\varphi(f)(m)(n) = f(m \otimes n)$,
for $f \in \HOM_R(M \otimes_R N)$, $m \in M$ and $n \in N$,
is a well defined isomorphism in $\AbG$.
In that case, if $R$ and $S$ are object unital, 
$M \in \gmodr$, 
$N \in \grmods$ and 
$P \in \gmods$, then 
the functors 
$$M \otimes_R - : \grmod \to \gmods$$ 
and 
$$HOM_S(-,P) : \gmods \to \grmod$$ 
form and adjoint pair.
\end{prop}

\begin{proof}
From Proposition \ref{propactions}(b) it follows that 
$\Hom_S(N,P)$ is a (unitary) $\G$-graded right $R$-module
(if $N$ is unitary as a left $R$-module).
It is clear that $\varphi$ is graded.
To show that $\varphi$ is an isomorphism we proceed as in the
classical case (see e.g. \cite[p. 92]{R79}). 
To prove the last statement, let 
$\phi$ denote the restriction of $\varphi$ to the sum
of the components of degree in $\G_0$. Then we get that
\[
\phi : 
\oplus_{e \in \G_0} HOM_S( M \otimes_R N , P)_e \to 
\oplus_{e \in \G_0} HOM_R(M , HOM_S(N , P))_e.
\]
is an isomorphism, or, in other words, that 
\[
\phi : 
HOM_{\gmods} ( M \otimes_R N , P) \to 
HOM_{\grmod} (M , HOM_S(N , P))
\]
is an isomorphism.
\end{proof}

%
%

\section{The ungrading functor}\label{sec:theungradingfunctor}

Throughout this section, $R$ denotes a $\G$-graded ring.
In this section, we analyse properties of the ungrading functor
$U$ from $\grmd$ (or $\grmod$) to $\rmd$ (or $\rmod$),
including the properties
direct summand, free, finitely generated, finitely presented, projective, injective,
essential, small and flat.
Note that a large part of the results that we are about to present
have already been obtained in the unital situation 
(see \cite{nastasescu2004} for the group graded case and
\cite{lundstrom2004} for the groupoid graded situation).
Therefore, we will often just sketch the proofs or refer
to the existing proofs in the unital case.
The following lemma will be used in the sequel.

\begin{lem}\label{lem:GRADEDLEMMA}
Suppose that $M,N,P \in \grmd$ and
$f : M \to P$, 
$g : N \to P$ and
$h : M \to N$ are $R$-linear maps such that 
$f = g \circ h$. If $f$ and $g$ ($f$ and $h$) 
are graded, then there is a graded map 
$h' : M \to N$ ($g' : N \to P$) such that 
$f = g \circ h'$ ($f = g' \circ h$).
\end{lem}

\begin{proof}
The proof of \cite[3.1.1 Lemma]{lundstrom2004} works in 
the non-unital situation also.
\end{proof}

Let $A$ and $B$ be objects in an abelian category. Recall that
$B$ is called a direct summand of $A$ if there is an object $C$
in the category such that $A \cong B \oplus C$.

\begin{cor}\label{cor:DIRECTSUMMAND}
Suppose that $M,N \in \grmd$ ($M,N \in \grmod$ and $R$ is object unital). 
If $N$ is a graded submodule of $M$, then $N$ is a direct summand of $M$
in $\grmd$ ($\grmod$) if and only if $U(N)$ is a direct summand of 
$U(M)$ in $\rmd$ ($\rmod$).
\end{cor}

\begin{proof}
This follows from Lemma \ref{lem:GRADEDLEMMA}.
\end{proof}

%

\begin{defi}\label{def:split}
Suppose that $L,M,N \in \grmd$ ($L,M,N \in \grmod$ and $R$ is object unital). 
We say that a short exact sequence of graded maps:
\begin{equation}\label{shortexact}
\begin{tikzcd}[column sep = small]
0 \arrow[r] & L \arrow[r, "f"] & M \arrow[r, "g"] & N \arrow[r] & 0
\end{tikzcd}
\end{equation}
{\it splits} if there is a graded isomorphism 
$h : M \to L \,\oplus\, N$ making the following diagram commutative:
\begin{equation}\label{diagcinde}
\begin{tikzcd}[row sep = large, column sep = normal]
0 \arrow[r] & L \arrow[d, "\id_{L}", swap] \arrow[r, "f"] & M \arrow[d, dashed, "h"] \arrow[r, "g"] & N \arrow[d, "\id_{N}"] \arrow[r] & 0 \\
0 \arrow[r] & L \arrow[r, hook, "\iota_L", swap] & L\oplus N \arrow[r, two heads, "\pi_N", swap] & N \arrow[r] & 0
\end{tikzcd}
\end{equation}
\end{defi}

\begin{prop}\label{grcinde}
With the notation from Definition \ref{def:split},
the following conditions are equivalent:
\begin{itemize}
\item[(i)] The sequence (\ref{shortexact}) splits.
\item[(ii)] There exists a graded map $\varphi \colon M \to L$ 
such that $\varphi \circ f = \id_L$.
\item[(iii)] There exists a graded map  $\psi \colon N \to M$ 
such that $g \circ \psi = \id_N$.
\end{itemize}
\end{prop}

\begin{proof} The ungraded proof of the implication (i)$\Rightarrow$(ii) 
and the equivalence (i) $\Leftrightarrow$ (iii) (see e.g. \cite{R79}) 
carry over to the graded situation, 
taking into account Lemma \ref{lem:GRADEDLEMMA}.
The implication (ii)$\Rightarrow$(i) can be proved in a similar way, 
using the Five Lemma for abelian categories (see e.g. \cite[Theorem 5.9]{DB}).
\end{proof}




\begin{defi}
If $M \in \grmd$ ($R$ is object unital and $M \in \grmod$), 
then we say that $M$ is {\it (finite) free by suspension}
if there is a (finite) set $I$ and $\sigma_i \in \G$, 
for $i \in I$, such that $M \cong \bigoplus_{i\in I} R(\sigma_i)$ 
in $\grmd$ ($\grmod$).
\end{defi}

\begin{rem}
In \cite{lundstrom2004} the concept ``free by suspension'' 
is called just ``free''. However, since not all groupoid graded
modules that are free by suspension are free in the usual 
module theoretic sense (see \cite[Example 3.2.1]{lundstrom2004}), 
that is, that the module has a ``basis''
with the property that none of the elements of this basis
can be annihilated by non-zero action of the ring, 
we have chosen to introduce this new adjective,
in order to not confuse the reader.
\end{rem}

\begin{prop}
Let $R$ be a $\G$-graded ring.
\begin{itemize}

\item[(a)] The ring $R$ is free by suspension.

\item[(b)] If $R$ has the property that
$R_e = \{ 0 \}$, for all but finitely many $e \in \G_0$,
then $R$ is finite free by suspension.

\item[(c)] If $R$ is unital, then $R$ is finite free by suspension.

\end{itemize}
\end{prop}

\begin{proof}
From Proposition \ref{prop:Ggroup} it follows that
$R = \oplus_{e \in \G_0} R(e)$ as $\G$-graded rings, if
we consider $R$ as a $\G$-graded module over itself.
Therefore (a) holds. (b) follows from the proof of (a);
(c) follows from (b) and \cite[Proposition 2.1.1]{lundstrom2004}.
\end{proof}



\begin{prop}\label{prop:DIRECTSUMMANDFREE}
If $M \in \grmd$ is (finite) free by suspension, then
there is $M' \in \grmd$ such that $M$ is (finite) free
by suspension and $U(M \oplus M')$ is weakly free (of finite type).
\end{prop}

\begin{proof}
We use the proof of \cite[Proposition 3.2.2.]{lundstrom2004}.
It is enough to prove the claim when $M = R(\sigma)$ for 
some $\sigma \in \G$. 
Put $M' = \bigoplus\limits_{e \in \G_0 \setminus \{ d(\sigma) \}} R(e)$.
Then $U(M \oplus M') = U(R)$ which is weakly free of finite type.
\end{proof}

\begin{defi}
Let $M \in \grmd$. If $n$ is a non-negative
integer, then we say that $M$ has a (finite) presentation of
length $n$ if there is an exact sequence
$F_n \rightarrow F_{n-1} \rightarrow \cdots
\rightarrow F_0 \rightarrow M \rightarrow 0$ 
of maps and modules in $\grmd$ which all are (finite) free by suspension.
If $M$ has a (finite) presentation of length $0$,
then we say that $M$ is {\it (finitely) generated}.
If $M$ has a (finite) presentation of length 1, 
then we say that $M$ is {\it (finitely) presented}.
\end{defi}

\begin{prop}\label{prop:finitegeneration}
Suppose that $M \in \grmd$. 
\begin{itemize}

\item[(a)] If $M$ has a finite presentation of length $n$,
then $U(M)$ has a finite presentation of length $n$.

\item[(b)] The module $M$ is (finitely) generated if and only if
$U(M)$ is finitely generated.

\end{itemize}
\end{prop}

\begin{proof}
Using Proposition \ref{prop:DIRECTSUMMANDFREE} it is clear that
the proof of \cite[Proposition 3.3.1]{lundstrom2004} works
in the non-unital situation also.
\end{proof}

\begin{prop}\label{prop:presentation}
Suppose that $M \in \grmd$ and $n \in \mathbb{N}$.
\begin{itemize}

\item[(a)] The module $M$ admits a presentation of length $n$.

\item[(b)] There is $F \in \grmd$, with $F$ free by suspension, 
and a  graded submodule $K$ of $F$ such that $F/K$ and $M$ are
isomorphic in $\grmd$. 

\item[(c)] The module $M$ is presented.

\item[(d)] The module $M$ is the direct limit of a direct system
of graded maps and finitely presented graded modules.

\end{itemize}
\end{prop}

\begin{proof}
The proof of \cite[Proposition 3.3.4]{lundstrom2004}
works in the non-unital situation also.
\end{proof}

Recall that an object $A$ in an abelian category
${\mathcal A}$ is called projective if the functor
$\hom_{\mathcal{A}}(A,-) : {\mathcal A} \to \Ab$ is exact.
Similarly to the ungraded situation, there is a 
characterization of projective objects in $\grmd$: 

\begin{prop}\label{proyhomex}
A module $P \in\grmd$ is projective if and only if 
for every surjective $g : M \to N$ in $\grmd$
and every $h : P \to N$ in $\grmd$, there exists
$\overline{h} : P \to M$ in $\grmod$
such that $h=g\circ\overline{h}$.
\end{prop}

\begin{prop}\label{prop:ABELIANPROJECTIVE}
Let ${\mathcal A}$ be an abelian category. Then:
\begin{itemize}
\item[{\rm (i)}] If $(P_i)_{i \in I}$ is a family
of objects in ${\mathcal A}$, then $\bigoplus_{i \in I} P_i$ is
projective if and only if each $P_i$ is projective.
\item[{\rm (ii)}] If
$0 \rightarrow A \rightarrow B \stackrel{\alpha}{\rightarrow}C
\rightarrow 0$ is an exact sequence in ${\mathcal A}$,
then the sequence splits if and only if there is
$\beta : C \rightarrow B$ such that
$\alpha \circ \beta = {\rm id}_C$.
\end{itemize}
\end{prop}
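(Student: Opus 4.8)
The plan is to treat the two items separately, relying on the definition of projectivity as exactness of the covariant Hom functor and on the biproduct structure available in any abelian category.

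For (i), the key observation is the natural isomorphism
$$\hom_{\mathcal{A}}\left(\bigoplus_{i\in I} P_i, -\right) \cong \prod_{i\in I} \hom_{\mathcal{A}}(P_i, -)$$
furnished by the universal property of the coproduct (which exists, since we are given $\bigoplus_{i\in I} P_i$ as an object). Since $\hom_{\mathcal{A}}(A,-)$ is left exact for every object $A$, projectivity amounts precisely to the preservation of epimorphisms. Thus I would apply $\hom_{\mathcal{A}}(\bigoplus_i P_i, -)$ to an arbitrary short exact sequence $0 \to A \to B \xrightarrow{\alpha} C \to 0$ in $\mathcal{A}$, transport the result through the isomorphism above, and reduce the exactness of
$$0 \to \prod_i \hom_{\mathcal{A}}(P_i, A) \to \prod_i \hom_{\mathcal{A}}(P_i, B) \to \prod_i \hom_{\mathcal{A}}(P_i, C) \to 0$$
to that of each factor. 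The point is that in $\textup{\textbf{Ab}}$ a product of a family of sequences is exact if and only if every factor is exact; in particular the product map is surjective if and only if each component is. This yields the equivalence in both directions at once.

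For (ii), one implication is immediate: if the sequence splits, an isomorphism $B \cong A \oplus C$ compatible with $\alpha$, composed with the canonical injection $C \to A \oplus C$, produces a morphism $\beta \colon C \to B$ with $\alpha \circ \beta = {\rm id}_C$. For the converse I would, given such a section $\beta$, recover a retraction of the monomorphism $f \colon A \to B$ representing $\ker \alpha$. Setting $p = {\rm id}_B - \beta \circ \alpha$, one checks $\alpha \circ p = 0$, so $p$ factors through the kernel $f$, giving $r \colon B \to A$ with $f \circ r = p$; pre-composing with $f$ and using that $f$ is monic shows $r \circ f = {\rm id}_A$. The pair $(r,\alpha)\colon B \to A \oplus C$, equivalently its inverse $A \oplus C \to B$ built from $f$ and $\beta$, is then the desired isomorphism, as one verifies from the biproduct identities.

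The arguments are entirely formal, so the main obstacle is not conceptual but bookkeeping: one must work with kernels, cokernels, monics and epics rather than with elements, and invoke the characterization of biproducts in an abelian category in place of explicit direct-sum decompositions. Once the natural isomorphism in (i) and the factorization through the kernel in (ii) are set up, the remaining verifications are the usual diagram-chasing identities valid in any abelian category (and, being standard, are recorded in \cite{S75}).
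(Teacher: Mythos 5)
Your proof is correct and is the standard argument; the paper itself offers no proof of this proposition, deferring to \cite{S75}, and both your reduction of (i) to componentwise surjectivity via $\hom(\bigoplus_i P_i,-)\cong\prod_i\hom(P_i,-)$ and your construction in (ii) of the retraction $r$ from $p=\mathrm{id}_B-\beta\circ\alpha$ factoring through the kernel are exactly the arguments recorded there. The only step left implicit, the identity $r\circ\beta=0$ needed to verify that your two maps are mutually inverse, follows at once from $f\circ r\circ\beta=p\circ\beta=0$ and $f$ being monic, so nothing essential is missing.
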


\begin{proof}
These are standard facts which can be found in \cite{S75}.
\end{proof}

\begin{lem}\label{lem:FREEIMPLPROJECTIVE}
If $M \in \grmd$ is free by suspension, then $M$ is projective in $\grmd$.
\end{lem}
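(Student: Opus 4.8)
The plan is to reduce the statement to the single-suspension case and then verify the lifting property directly. By Proposition \ref{prop:ABELIANPROJECTIVE}(i) together with the fact that projectivity is invariant under isomorphism, a module $M \cong \bigoplus_{i \in I} R(\sigma_i)$ is projective in $\grumod$ as soon as each $R(\sigma_i)$ is projective. Thus the whole problem collapses to showing that $R(\sigma)$ is projective for an arbitrary $\sigma \in \G$.

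For that I would invoke the internal lifting criterion of Proposition \ref{proyhomex}: given an epimorphism $g \in \hom_{\grumod}(M,N)$ and a graded map $h \in \hom_{\grumod}(R(\sigma), N)$, I must produce a graded $\overline{h}$ with $g \circ \overline{h} = h$. The starting observation is Lemma \ref{suspension-ideal}(i), which says $R(\sigma) = R\,1_{R_{d(\sigma)}}$ is cyclic, generated by the homogeneous element $1_{R_{d(\sigma)}}$, whose degree in $R(\sigma)$ is $\sigma^{-1}$ (since $1_{R_{d(\sigma)}} \in R_{\sigma^{-1}\sigma} = R(\sigma)_{\sigma^{-1}}$). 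Because $h$ is degree-preserving, $h(1_{R_{d(\sigma)}})$ lies in $N_{\sigma^{-1}}$, and since $g$ is a graded epimorphism I can choose a homogeneous preimage $m \in M_{\sigma^{-1}}$ with $g(m) = h(1_{R_{d(\sigma)}})$ (replace an arbitrary preimage by its degree-$\sigma^{-1}$ component). I then define $\overline{h}(r\,1_{R_{d(\sigma)}}) = rm$ for $r \in R$, the only choice compatible with $R$-linearity and the generator.

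The crux---and the place where being \emph{object unital} rather than merely unital does the work---is checking that $\overline{h}$ is well defined and graded. Well-definedness amounts to showing that $s\,1_{R_{d(\sigma)}} = 0$ forces $sm = 0$; this holds because $r(\sigma^{-1}) = d(\sigma)$, so graded unitality of $M$ gives $1_{R_{d(\sigma)}} m = m$, whence $sm = s(1_{R_{d(\sigma)}} m) = (s\,1_{R_{d(\sigma)}})m = 0$. The same identity yields $\overline{h}(1_{R_{d(\sigma)}}) = m$, so $g(\overline{h}(1_{R_{d(\sigma)}})) = g(m) = h(1_{R_{d(\sigma)}})$, and $R$-linearity propagates $g \circ \overline{h} = h$ to all of $R(\sigma)$. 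That $\overline{h}$ is graded follows from the degree computation $R_{\tau\sigma} M_{\sigma^{-1}} \subseteq M_{\tau}$ (using $d(\tau\sigma)=r(\sigma^{-1})=d(\sigma)$ and $\tau\sigma\sigma^{-1}=\tau r(\sigma)=\tau$) applied to the homogeneous elements of $R(\sigma)_\tau = R_{\tau\sigma}\,1_{R_{d(\sigma)}}$; alternatively, one may build any set-theoretic lift and then appeal to Lemma \ref{lem:GRADEDLEMMA} to replace it by a graded one. The only delicate point is the bookkeeping with the maps $r$ and $d$, so that every groupoid product appearing is genuinely composable; once the identities $r(\sigma^{-1}) = d(\sigma)$ and $\tau\sigma\sigma^{-1} = \tau$ are in hand, everything reduces to the object-unital hypothesis exactly as in \cite[Lemma 3.2.1]{lundstrom2004}.
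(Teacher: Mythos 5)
Your proof is correct and follows essentially the same route as the paper, which defers to \cite[Lemma 3.2.1]{lundstrom2004} citing exactly the ingredients you use: Proposition \ref{prop:ABELIANPROJECTIVE}(i) to reduce to a single $R(\sigma)$, Lemma \ref{suspension-ideal} for the cyclic generator $1_{R_{d(\sigma)}}$, and Lemma \ref{lem:GRADEDLEMMA} to keep the lift graded. Your well-definedness argument via $1_{R_{d(\sigma)}}m = m$ is the same mechanism the paper employs in the proof of Lemma \ref{Re-grproyectivo}.
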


\begin{proof}
Using Lemma \ref{lem:GRADEDLEMMA},
Proposition \ref{prop:presentation}(b) and
Proposition \ref{prop:ABELIANPROJECTIVE}(i), 
it is clear that we can use the proof of
\cite[Lemma 3.2.1]{lundstrom2004}
in the non-unital situation also.
\end{proof}

Now  we give a graded version of \cite[Proposition 2.2]{ARM}.

\begin{lem}\label{Re-grproyectivo}
With the above notations:
\begin{itemize}

\item[(a)] If $e \in \G_0$ and $u \in R_e$ is an idempotent,
then $R u$ is projective as an object in $\grmd$.

\item[(b)] If $R$ is object unital, then $R$ is projective 
as an object in $\grmod$.

\end{itemize}
\end{lem}

\begin{proof}
(a) Suppose that $g : M \to N$ and $h : Ru \to N$ 
are morphisms in $\grmd$ with $g$ surjective.
Since $u = u^2$, we get that $u \in Ru$.
Thus, from the surjectivity of $g$, it follows that 
there exists $m' \in M$ such that $g(m') = h(u)$.
Put $m = um'$. Then $g(m) = g(u m') = ug( m' ) = u h(u)
= h(u^2) = h(u)$.
Define $\overline{h} : Ru \to M$ by
$\overline{h}(ru) = r m$, for $r \in R$.
Now we show that $\overline{h}$ is well defined.
Suppose that $ru = 0$ for some $r \in R$.
Then $r m = r (u m') = (ru) m' = 0 m' = 0$.
It is clear that $\overline{h}$ is $R$-linear.
By Lemma \ref{lem:GRADEDLEMMA}, we can choose $\overline{h}$
so that it is graded.

(b) This follows from (a) above, Proposition \ref{prop:Ggroup}(a)
and Proposition \ref{prop:ABELIANPROJECTIVE}(a), since
$R = \oplus_{e \in \G_0} R(e) = \oplus_{e \in \G_0} R 1_e$
as objects in $\grmod$.
\end{proof}

\begin{rem} 
Notice that if $A$ is a non-unital ring then $_AA$ is not necessarily
projective. Indeed, $_AA$ is is in general  locally projective 
(see \cite[Proposition 2]{AnM}) .

\end{rem}

\begin{prop}\label{caracpro}
Suppose that $R$ is object unital and $P \in \grmod$. 
The following statements are equivalent:
\begin{itemize}

\item[(i)] $U(P)$ is projective.

\item[(ii)] $U(P)$ is projective in $R^1$-mod, 
where $R^1 = R \times \mathbb{Z}$ is the unitalization of $R.$

\item[(iii)] $P$ is projective.

\item[(iv)] Every short exact sequence in $\grmod$ 
\begin{equation}\label{procinde}
\begin{tikzcd}[column sep = small]
0 \arrow[r] & L \arrow[r, "f"] & M \arrow[r, "g"] & P \arrow[r] & 0
\end{tikzcd}
\end{equation}
splits.

\item[(v)] $P$ is a direct summand in $\grmod$ of a module which is free 
by suspension.
\end{itemize}

\end{prop}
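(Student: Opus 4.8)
The plan is to prove the five conditions equivalent by first closing the triangle among the three intrinsically graded conditions (iii), (iv), (v), then bridging to the ungraded condition (i) via the forgetful functor, and finally treating (ii) separately as a statement about the unitalization. Throughout I will exploit that $R$, being object unital, has enough idempotents: by the discussion of $\U_{gr}(R)$, the elements $1_{R_e}$ ($e\in\G_0$) are pairwise orthogonal homogeneous idempotents, and by Example \ref{EXSUM} together with Lemma \ref{suspension-ideal}(i) one has $R=\bigoplus_{e\in\G_0}R1_{R_e}$.

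First I would establish the cycle (v) $\Rightarrow$ (iii) $\Rightarrow$ (iv) $\Rightarrow$ (v). For (v) $\Rightarrow$ (iii), a free module by suspension is projective by Lemma \ref{lem:FREEIMPLPROJECTIVE}, and a direct summand of a projective object is projective by Proposition \ref{prop:ABELIANPROJECTIVE}(i). For (iii) $\Rightarrow$ (iv), given the short exact sequence \eqref{procinde}, I would apply the lifting characterization of Proposition \ref{proyhomex} to $\id{P}$ and the epimorphism $g$ to produce $\psi\colon P\to M$ in $\grumod$ with $g\circ\psi=\id{P}$; then Proposition \ref{grcinde}(iii) shows the sequence splits. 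For (iv) $\Rightarrow$ (v), Proposition \ref{cocientelibre} presents $P$ as $0\to K\to F\xrightarrow{\pi}P\to0$ in $\grumod$ with $F$ free by suspension, and by (iv) this splits, so $P$ is a direct summand of $F$.

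Next I would bridge to (i) by proving both implications with (v). For (v) $\Rightarrow$ (i), write $P$ as a direct summand of a free module by suspension $F$; by Proposition \ref{prop:DIRECTSUMMANDFREE} there is a free $F'$ with $U(F\oplus F')$ free in $\rmod$, and a free $R$-module is projective in $\rmod$ (the ungraded analogue of Lemma \ref{Re-grproyectivo}, since $R=\bigoplus_{e}R1_{R_e}$), so $U(F)$ and its summand $U(P)$ are projective. The reverse implication (i) $\Rightarrow$ (v) is the crucial descent step: take again the free presentation $0\to K\to F\xrightarrow{\pi}P\to0$ from Proposition \ref{cocientelibre}. Since $U(P)$ is projective, the sequence $0\to U(K)\to U(F)\to U(P)\to0$ splits in $\rmod$, so $U(K)$ is a direct summand of $U(F)$; as $K$ is a graded submodule of $F$, Corollary \ref{cor:DIRECTSUMMAND} upgrades this to a graded splitting, whence $P\cong F/K$ is a direct summand of the free module by suspension $F$, which is (v). Together with the triangle this makes (i), (iii), (iv), (v) equivalent.

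Finally I would treat (i) $\Leftrightarrow$ (ii). Every unitary $R$-module becomes a unital $R^1$-module via $(r,n)\cdot m=rm+nm$, and $R$-linear maps between unitary modules are automatically $R^1$-linear; hence an $R^1$-linear lift against a surjection arising from $\rmod$ is already the desired $R$-linear lift, giving (ii) $\Rightarrow$ (i). For (i) $\Rightarrow$ (ii) I would invoke the equivalence with (v): $U(P)$ is a direct summand of some $U(F\oplus F')$ that is free in $\rmod$, and since $R=\bigoplus_{e\in\G_0}R1_{R_e}$ with $R1_{R_e}=R^1 1_{R_e}$ a direct summand of $R^1$, every free $R$-module is projective over $R^1$, and therefore so is $U(P)$. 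I expect the main obstacle to be the descent step (i) $\Rightarrow$ (v), namely passing from a merely $R$-linear splitting to a graded one; this is precisely where Corollary \ref{cor:DIRECTSUMMAND} is indispensable, and it is the reason for running the argument through the canonical free presentation of Proposition \ref{cocientelibre} rather than through an arbitrary short exact sequence.
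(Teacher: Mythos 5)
Your proof is correct, and its overall skeleton (the cycle through iii), iv), v) using Propositions \ref{proyhomex}, \ref{grcinde}, \ref{cocientelibre}, Lemma \ref{lem:FREEIMPLPROJECTIVE} and Proposition \ref{prop:ABELIANPROJECTIVE}) coincides with the paper's. You deviate in two places, both legitimately. First, for the descent from i) to the graded statements, the paper proves i) $\Rightarrow$ iii) directly: it takes an arbitrary lifting problem in $\grumod$, solves it in $\rmod$ by projectivity of $U(P)$, and then regrades the lift with Lemma \ref{lem:GRADEDLEMMA}. You instead prove i) $\Rightarrow$ v) by splitting the canonical free presentation $0\to K\to F\to P\to 0$ in $\rmod$ and invoking Corollary \ref{cor:DIRECTSUMMAND} to make the splitting graded; since that corollary is itself a consequence of Lemma \ref{lem:GRADEDLEMMA}, the two arguments rest on the same engine, but yours is arguably cleaner in that it only regrades one specific retraction rather than an arbitrary lift (the paper, by contrast, gets iii) $\Rightarrow$ i) by citing Lundstr\"om, which you avoid entirely by routing through v)). Second, for i) $\Leftrightarrow$ ii) the paper simply cites \cite[Proposition 2.4(ii)]{ARM} together with local unitality of $R$, whereas you give a self-contained argument: restriction of scalars along $R\hookrightarrow R^1$ for ii) $\Rightarrow$ i), and the observation that $R^1 1_{R_e}=R1_{R_e}$ is a direct summand of $R^1$ for i) $\Rightarrow$ ii). That is exactly the content of the cited result, so your version buys independence from the reference at the cost of a little extra text. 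The only caveat, which you inherit from the statement itself rather than introduce, is that the hypotheses should really place $P$ in $\grumod$ (so that $U(P)$ is unitary and Proposition \ref{cocientelibre} applies in the form you use it); with that reading every step checks out.
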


\begin{proof}
(i)$\Leftrightarrow$(ii): 
This follows from (ii) of \cite[Proposition 2.4]{ARM} and the fact that $R$ is locally unital.

(i)$\Rightarrow$(iii): 
Consider the diagram $$\begin{tikzcd}[row sep = large, column sep = normal]
& P \arrow[d, "h"] \arrow[dl, dashed, "\overline{h}", swap] & \\ 
M \arrow[r, "g", swap] & N \arrow[r] &  0
\end{tikzcd}$$ 
of morphisms in $\grmod$ where $g$ is surjective. 
Since $P$ is projective in $\rmod$ there is 
$\overline{h}\colon P\to M$ in $\rmod$ such that $h=g\circ \overline{h}$. But then by  Lemma \ref{lem:GRADEDLEMMA} the map $\overline{h}$ can be considered in $\grmod$ and so $P$ is projective.

(iii)$\Rightarrow$(i): This can be done as the second part of the proof of \cite[Proposition 3.4.3.]{lundstrom2004}. 

(iii)$\Rightarrow$(iv): Consider the following diagram: $$\begin{tikzcd}[row sep = large, column sep = normal]
& & & P \arrow[d, "\id{P}"] \arrow[dl, dashed, "\varphi", swap] & \\ 
0\arrow[r] & L \arrow[r, "f", swap] & M \arrow[r, "g", swap] & P\arrow[r] & 0
\end{tikzcd}$$ If $P$ is projective, there is $\varphi\colon P\to M$ in $\grmod$ such that $g\circ\varphi=\id{P}$. By Proposition \ref{grcinde}, the sequence \eqref{procinde} splits.

(iv)$\Rightarrow$(v): By Proposition \ref{prop:presentation}(b) 
there is a short exact sequence 
$$\begin{tikzcd}[row sep = large, column sep = small]
0\arrow[r] & \operatorname{Ker}\varphi \arrow[r, hook] & F \arrow[r, "\varphi"] & P \arrow[r] & 0
\end{tikzcd}$$ with $F$ free by suspension. By hypothesis, this sequence split in $\grmod$, so $P$ is a direct summand of $F$.

(v)$\Rightarrow$(i): By Lemma \ref{lem:FREEIMPLPROJECTIVE} and Proposition \ref{prop:ABELIANPROJECTIVE} it follows that $P$ is projective. On the other hand, the proof of Lemma \ref{Re-grproyectivo} can be used to show that every free module by suspension is projective in $\rmod$ (specifically, every $R(\sigma)$, $\sigma\in\G$). Therefore, $U(P)$ being a direct summand of a direct sum of projective modules, is projective in $\rmod$.
\end{proof}

\begin{cor}\label{fg}
If $R$ is object unital and $P \in \grmod$, 
then $P$ is projective finitely generated if and only if $U(P)$ 
is projective finitely generated.
\end{cor}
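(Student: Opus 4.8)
The plan is to prove Corollary \ref{fg} as a straightforward combination of the projectivity equivalence already established in Proposition \ref{caracpro} (namely i) $\Leftrightarrow$ iii), which says $P$ is projective in $\grumod$ if and only if $U(P)$ is projective in $\rmod$) together with the finite-generation transfer supplied by the cited result \cite[Proposition 3.5.1 b)]{lundstrom2004}. Since projectivity is already fully handled by the earlier proposition, the only genuine content left is to show that, under the object-unital hypothesis, $M$ is finitely generated in $\grumod$ if and only if $U(M)$ is finitely generated in $\rmod$. The intended structure is: assume one side, split off the two assertions (projective / finitely generated), transfer each separately through its respective bridge result, and recombine.

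First I would record that the two properties travel through independent functorial facts. For the projective half, Proposition \ref{caracpro} gives directly that $M$ is projective in $\grumod$ iff $U(M)$ is projective in $\rmod$; this requires no additional argument. For the finitely-generated half, the plan is to invoke \cite[Proposition 3.5.1 b)]{lundstrom2004}, which asserts that the forgetful functor $U$ both preserves and reflects finite generation for object-unital groupoid-graded modules. In the forward direction I would note that a finite homogeneous generating set for $M$ over $R$ is literally a finite generating set for $U(M)$, so one implication is formal. For the reverse direction, the subtlety is that an arbitrary finite $R$-generating set of $U(M)$ need not consist of homogeneous elements; here I would replace each generator by its finitely many homogeneous components (using that every element of $M$ has a finite homogeneous decomposition), obtaining a finite homogeneous generating set, which then generates $M$ as a graded module thanks to graded unitality (Proposition \ref{gruu} guarantees $M = RM$, so the action recovers every element from its components).

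To assemble the corollary, I would argue in both directions simultaneously by chaining the two equivalences. If $M$ is projective and finitely generated in $\grumod$, then $U(M)$ is projective by Proposition \ref{caracpro} (i) $\Leftrightarrow$ iii) and finitely generated by \cite[Proposition 3.5.1 b)]{lundstrom2004}; conversely, if $U(M)$ is projective and finitely generated, the same two results run backwards to yield that $M$ is projective and finitely generated in $\grumod$. Because each attribute is transferred by a logically independent equivalence, no interaction between the two needs to be checked, and the conjunction follows immediately.

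The main obstacle I anticipate is entirely in the finitely-generated reflection, i.e. extracting a finite \emph{homogeneous} generating set for $M$ from a finite (possibly inhomogeneous) generating set of $U(M)$; this is exactly where the object-unital and graded-unital hypotheses do their work, ensuring that the homogeneous components lie in $M$ and that $M = RM$ lets them generate. Everything else is a formal invocation of previously established equivalences. Given that \cite[Proposition 3.5.1 b)]{lundstrom2004} is cited as already proving this transfer in the object-unital setting, I expect the proof to be short: state the two equivalences, apply them componentwise to the conjunction, and conclude.
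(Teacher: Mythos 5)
Your proposal is correct and matches the paper's own (one-line) proof exactly: the corollary is obtained by combining the projectivity equivalence of Proposition \ref{caracpro} (i) $\Leftrightarrow$ iii) with the finite-generation transfer from \cite[Proposition 3.5.1 b)]{lundstrom2004}, treating the two attributes independently. Your extra elaboration on extracting a finite homogeneous generating set is just an unfolding of the cited result and does not change the route.
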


\begin{proof}
This follows from Proposition \ref{prop:finitegeneration} 
and Proposition \ref{caracpro}.
\end{proof}

Recall that an object $A$ in an abelian category ${\mathcal A}$
is called injective if the functor
$\hom_{\mathcal{A}}(-,A) : {\mathcal A} \rightarrow \Ab$ is exact.
Similarly to the ungraded situation, there is a 
characterization of injective objects in $\grmd$: 

\begin{prop}
A module $Q \in \grmd$ is injective if and only if 
for every injective $f : M \to N$ in $\grmd$ and 
every $h : M \to Q$ in $\grmd$, 
there exists $\overline{h} : N \to Q$ in $\grmd$ 
such that $h= \overline{h} \circ f$
\end{prop}

\begin{prop}\label{prop:inj}
Let $(A_i)_{i \in I}$ be a family of objects in an abelian
category. Then $\prod_{i \in I} A_i$ is injective if and only if
each $A_i$ is injective.
\end{prop}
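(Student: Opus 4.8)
The statement to prove is Proposition~\ref{prop:inj}: for a family $(A_i)_{i\in I}$ of objects in an abelian category, $\prod_{i\in I}A_i$ is injective if and only if each $A_i$ is injective.

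The plan is to prove this using the universal property of the product together with the characterization of injectivity via the lifting/extension property along monomorphisms. Throughout I would work with the defining feature of an injective object $Q$: for every monomorphism $f\colon M\to N$ and every morphism $h\colon M\to Q$, there exists an extension $\overline{h}\colon N\to Q$ with $h=\overline{h}\circ f$.

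First I would prove the ``if'' direction, that each $A_i$ injective implies $\prod_{i\in I}A_i$ injective. Let $P=\prod_{i\in I}A_i$ with canonical projections $p_i\colon P\to A_i$, let $f\colon M\to N$ be a monomorphism, and let $h\colon M\to P$ be arbitrary. For each $i\in I$, consider the composite $p_i\circ h\colon M\to A_i$. Since $A_i$ is injective, there is an extension $g_i\colon N\to A_i$ with $p_i\circ h=g_i\circ f$. By the universal property of the product, the family $(g_i)_{i\in I}$ determines a unique morphism $\overline{h}\colon N\to P$ satisfying $p_i\circ\overline{h}=g_i$ for all $i$. One then checks that $\overline{h}\circ f=h$: for each $i$ we have $p_i\circ(\overline{h}\circ f)=g_i\circ f=p_i\circ h$, and since this holds for every projection, the universal property forces $\overline{h}\circ f=h$. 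Hence $P$ is injective.

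For the ``only if'' direction, suppose $P=\prod_{i\in I}A_i$ is injective; I would fix an index $j\in I$ and show $A_j$ is injective. Here the key is to use both the canonical injection $\iota_j\colon A_j\to P$ and the projection $p_j\colon P\to A_j$, which satisfy $p_j\circ\iota_j=\id{A_j}$ (so $A_j$ is a retract of $P$). Given a monomorphism $f\colon M\to N$ and a morphism $h\colon M\to A_j$, form $\iota_j\circ h\colon M\to P$; by injectivity of $P$ there is $k\colon N\to P$ with $k\circ f=\iota_j\circ h$. Then $\overline{h}:=p_j\circ k\colon N\to A_j$ is the desired extension, since $\overline{h}\circ f=p_j\circ k\circ f=p_j\circ\iota_j\circ h=h$. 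This establishes injectivity of each $A_j$. I expect the main obstacle to be purely notational bookkeeping rather than conceptual: the only subtlety is ensuring that in the ``only if'' direction one genuinely has a retraction $p_j\circ\iota_j=\id{A_j}$, which is a standard consequence of the interplay between the product and the (automatically existing) coproduct injections in an abelian category. Since the argument is formally dual to the corresponding statement for projectives and coproducts already treated in Proposition~\ref{prop:ABELIANPROJECTIVE}(i), the routine diagram chases may be omitted in the final write-up.
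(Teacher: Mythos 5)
Your proof is correct and follows the standard retract/universal-property argument; the paper itself offers no proof of this proposition, simply citing Stenstr\"{o}m \cite{S75}, where essentially the same argument appears, so there is nothing to contrast. One minor point of precision: the section $\iota_j$ with $p_j\circ\iota_j=\id{A_j}$ is obtained directly from the universal property of the product applied to the family consisting of the identity at index $j$ and zero morphisms elsewhere, so no appeal to coproduct injections is needed (and for infinite $I$ the coproduct need not coincide with the product in a general abelian category).
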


\begin{proof}
These are standard facts which can be found in \cite{S75}.
\end{proof}

Now we give a description of the injective objects in $\grmod$
analogous to Baer's criterion (see e.g. \cite{R79}).

\begin{prop}\label{inyecinde}
The following statements for $Q\in\grmod$ are equivalent:
\begin{itemize}
\item[i)] $Q$ is injective.
\item[ii)] Every short exact sequence in $\grmd$ 
\begin{equation}\label{inycinde}
\begin{tikzcd}[column sep = small]
0 \arrow[r] & Q \arrow[r, "f"] & M \arrow[r, "g"] & N \arrow[r] & 0
\end{tikzcd}
\end{equation}
splits.
\item[{\rm (iii)}] The functor 
${\rm HOM}_{\grmd}(-, M) : \rmd \rightarrow \Ab_{\G}$
is exact.
\item[{\rm (iv)}] For every graded left ideal $I$ of $R$,
the canonical map
$${\rm HOM}_R(R , M) \rightarrow {\rm HOM}_R(I , M)$$
is surjective.
\end{itemize}
\end{prop}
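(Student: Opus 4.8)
The plan is to organise the equivalences around the Baer-type criterion (iv), proving the core cycle (i) $\Rightarrow$ (iii) $\Rightarrow$ (iv) $\Rightarrow$ (i) and then attaching (ii) through the equivalence (i) $\Leftrightarrow$ (ii). Throughout I would exploit two bookkeeping facts about the grading: first, that a sequence in $\textbf{Ab}_{\G}$ is exact precisely when it is exact in each homogeneous degree; and second, the natural identification $HOM_R(M,Q)_{\sigma}\cong \hom_{\grumod}(M,Q(\sigma))$, under which a graded map of degree $\sigma$ from $M$ to $Q$ is the same datum as an honest morphism $M\to Q(\sigma)$ in $\grumod$. Combined with Proposition \ref{prop:sigmainv}, these let me pass freely between statements about graded maps of arbitrary degree and statements about morphisms into suspensions of $Q$.

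For (i) $\Rightarrow$ (iii): the left-exactness of $HOM_R(-,Q)$ is exactly Proposition \ref{prop:directsum}(c), so the only issue is surjectivity of $HOM_R(B,Q)\to HOM_R(A,Q)$ for a monomorphism $A\hookrightarrow B$ in $\grumod$; checking this in degree $\sigma$ amounts, via the identification above, to extending morphisms into $Q(\sigma)$, which is possible once I know that every suspension $Q(\sigma)$ is injective. This last point I would obtain from Proposition \ref{prop:sigmainv}(b): since $\Sigma_{\sigma}$ is invertible, $T_{\Sigma_{\sigma}}$ is an auto-equivalence and hence preserves injectives, so $T_{\Sigma_{\sigma}}(Q)=\bigoplus_{\tau\in\Sigma_{\sigma}}Q(\tau)$ is injective; as $Q(\sigma)$ is one of its direct summands, Proposition \ref{prop:inj} gives that $Q(\sigma)$ is injective. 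For (iii) $\Rightarrow$ (iv) I would simply apply the exact functor $HOM_R(-,Q)$ to the inclusion $\iota\colon I\hookrightarrow R$ of a graded left ideal (both $I$ and $R$ being objects of $\grumod$, by Example \ref{EXSUM}): exactness turns this monomorphism into the epimorphism $HOM_R(R,Q)\to HOM_R(I,Q)$, which is the asserted surjectivity.

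The heart of the argument, and the step I expect to be the main obstacle, is the Baer criterion (iv) $\Rightarrow$ (i). Given a monomorphism $A\hookrightarrow B$ in $\grumod$ and a morphism $h\colon A\to Q$, I would run Zorn's Lemma over the poset of graded extensions $(A',h')$, where $A\subseteq A'\subseteq B$ is a graded submodule and $h'\colon A'\to Q$ restricts to $h$; a maximal element $(A',h')$ exists, and if $A'\neq B$ I choose a homogeneous $b\in B\setminus A'$, say $\deg(b)=\tau$. The delicate, genuinely graded part is then to verify that $I:=\{r\in R : rb\in A'\}$ is a \emph{graded} left ideal and that the assignment $r\mapsto h'(rb)$ defines an element of $HOM_R(I,Q)_{\tau}$ (its degree is exactly $\tau$, because $h'$ is graded and $rb$ has degree shifted by $\tau$); applying (iv) in its full graded generality extends this to a map $R\to Q$ of degree $\tau$, from which I build a graded extension of $h'$ to the strictly larger graded submodule $A'+Rb$, contradicting maximality. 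Keeping the extension graded at each stage—so that Zorn is applied inside $\grumod$ rather than $\rmod$—is precisely where Lemma \ref{suspension-ideal} and the degree computations must be handled with care; this is the step that departs most from the ungraded template.

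Finally, for (i) $\Leftrightarrow$ (ii): the implication (i) $\Rightarrow$ (ii) is immediate, since injectivity lets me extend $\id{Q}$ along the monomorphism $f$ to a retraction $\varphi\colon M\to Q$ with $\varphi\circ f=\id{Q}$, whence the sequence splits by Proposition \ref{grcinde}. For (ii) $\Rightarrow$ (i) I would use that $\grumod$ is a Grothendieck category—the suspensions $\{R(\sigma)\}_{\sigma\in\G}$ form a generating family and filtered colimits are computed degreewise as in $\textbf{Ab}_{\G}$—so it has enough injectives; embedding $Q$ into an injective $E$ and splitting the sequence $0\to Q\to E\to E/Q\to 0$ by (ii) exhibits $Q$ as a direct summand of $E$, hence injective by Proposition \ref{prop:inj}. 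This direction is the one in which I would need to be most careful that the ambient structural input (enough injectives in $\grumod$) is genuinely available, since the introduction records enough projectives but not enough injectives.
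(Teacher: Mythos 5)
Your proposal is correct, and for the cycle (i) $\Rightarrow$ (iii) $\Rightarrow$ (iv) $\Rightarrow$ (i) together with (i) $\Rightarrow$ (ii) it matches the paper's route essentially step for step: the paper also proves (i) $\Rightarrow$ (iii) by invoking the invertibility of $\Sigma_\sigma$ from Proposition \ref{prop:sigma}(c) and the auto-equivalence $T_{\Sigma_\sigma}$ of Proposition \ref{prop:sigmainv}(b) (deferring details to Lundstr\"om's Proposition 3.5.2), treats (iii) $\Rightarrow$ (iv) as immediate, and handles (iv) $\Rightarrow$ (i) by the same graded Baer/Zorn argument you sketch. The one place you genuinely diverge is (ii) $\Rightarrow$ (i). The paper gives a direct, self-contained argument: given a monomorphism $f\colon M\to N$ and $h\colon M\to Q$, it forms the pushout $J=(Q\oplus N)/K$ with $K=\{h(m)-f(m)\colon m\in M\}$, observes that $Q\to J$ is a graded monomorphism, splits it using (ii), and composes the retraction with $N\to J$ to produce the required extension $\overline{h}$. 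You instead embed $Q$ into an injective object $E$ and split $0\to Q\to E\to E/Q\to 0$, which requires knowing that $\grumod$ has enough injectives; you justify this by arguing that $\grumod$ is a Grothendieck category (the suspensions $R(\sigma)$ generate by Proposition \ref{cocientelibre}, and filtered colimits are exact degreewise). That argument does go through, but it imports a nontrivial structural theorem (Grothendieck categories have enough injectives) that the paper nowhere establishes or cites --- indeed, as you note, the introduction only records enough projectives. The paper's pushout construction buys you independence from that machinery and keeps the proof elementary; your route buys generality (it is the standard categorical argument and would apply verbatim in any abelian category with enough injectives), at the cost of an extra foundational input that, in a fully written-up version, you would need to prove or properly reference.
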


\begin{proof}
It is clear that if we use Proposition \ref{prop:inj}, 
then we can proceed as in the proof of 
\cite[Proposition 3.5.2]{lundstrom2004}
in the non-unital situation also.
\end{proof}

\begin{cor}\label{cor:UMMINJECTIVE}
If $M \in \grmd$ has $U(M)$ is injective, 
then $M$ is injective.
\end{cor}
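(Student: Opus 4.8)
The plan is to prove Corollary \ref{cor:UMMINJECTIVE} by reducing injectivity in $\grumod$ to injectivity in $\rmod$ via the characterization established in Proposition \ref{inyecinde}(ii), namely that $M$ is injective precisely when every short exact sequence in $\grumod$ starting at $M$ splits. So suppose $U(M)$ is injective as a unitary left $R$-module. Given a short exact sequence in $\grumod$
\begin{equation*}
\begin{tikzcd}[column sep = small]
0 \arrow[r] & M \arrow[r, "f"] & A \arrow[r, "g"] & B \arrow[r] & 0,
\end{tikzcd}
\end{equation*}
applying the forgetful functor $U$ yields a short exact sequence of unitary left $R$-modules. First I would use the injectivity of $U(M)$ together with the ungraded analogue of Proposition \ref{prop:ABELIANPROJECTIVE}(ii) (the dual splitting criterion, valid in $\rmod$ since $U(M)$ is injective) to produce an $R$-linear retraction $\varphi\colon U(A)\to U(M)$ with $\varphi\circ U(f)=\id{M}$.

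The crucial second step is to upgrade this ungraded splitting map to a graded one. Here I would invoke Lemma \ref{lem:GRADEDLEMMA}: we have the identity $\id{M}=\varphi\circ f$ where $\id{M}$ and $f$ are both graded maps, so the lemma (in its ``$f$ and $h$ graded'' branch, with $h=f$, $f=\id{M}$, $g=\varphi$) furnishes a graded map $\varphi'\colon A\to M$ with $\id{M}=\varphi'\circ f$. Thus $\varphi'$ is a morphism in $\grumod$ splitting $f$ on the left. By the equivalence i)$\Leftrightarrow$ii) of Proposition \ref{grcinde}, the original sequence splits in $\grumod$, and therefore by Proposition \ref{inyecinde} the module $M$ is injective.

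I expect the main subtlety to lie in correctly orienting Lemma \ref{lem:GRADEDLEMMA} so that its hypotheses match our situation: the lemma is stated for a factorization $f=g\circ h$ and promotes one of the two factors to a graded map when the composite and the other factor are graded. One must check that the roles are assigned consistently, with the composite being $\id{M}$ (certainly graded) and the known graded factor being $f$, so that it is the retraction that gets replaced by a graded version. A minor point to verify is that $M$, being an object of $\grumod$, is genuinely unitary (guaranteed by Proposition \ref{gruu}) so that $U(M)\in\rmod$ and the ungraded injectivity hypothesis is meaningful. Since these verifications are routine and the essential mechanism is the same interplay between forgetful-functor splitting and Lemma \ref{lem:GRADEDLEMMA} already used in the projective case (compare the proof of i)$\Rightarrow$iii) in Proposition \ref{caracpro}), the argument is short, which is consistent with the statement being labelled a direct consequence of Proposition \ref{prop:inj}.
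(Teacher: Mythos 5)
Your argument is correct, and its engine is the right one: Lemma \ref{lem:GRADEDLEMMA} applied with the composite $\id{M}=\varphi\circ f$ and the graded factor $f$, which replaces the ungraded retraction $\varphi$ by a graded one; you orient the lemma correctly. The paper itself offers no argument beyond the claim that the corollary is ``a direct consequence of Proposition \ref{prop:inj}'' (the product criterion for injectives), which does not by itself yield the statement and is presumably a slip for the argument you give (or for its more direct variant). That variant is worth noting: rather than routing through the splitting characterizations of Propositions \ref{inyecinde} and \ref{grcinde} --- the former of which requires the nontrivial pushout step ii)$\Rightarrow$i) --- one can attack the extension problem head-on. Given a graded monomorphism $f\colon A\to B$ and a graded map $h\colon A\to M$, injectivity of $U(M)$ produces an $R$-linear $g\colon B\to M$ with $h=g\circ f$, and Lemma \ref{lem:GRADEDLEMMA} (same branch you use) replaces $g$ by a graded $g'$ with $h=g'\circ f$; this proves injectivity of $M$ in one step. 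Your detour costs nothing in correctness but leans on more machinery than is needed; in exchange it runs entirely parallel to the projective case i)$\Rightarrow$iii) of Proposition \ref{caracpro}, which is a reasonable expository choice. The minor points you flag (exactness of $U$, unitarity of $M$ via Proposition \ref{gruu}, existence of the retraction from injectivity of $U(M)$) are indeed routine.
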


\begin{rem}
The converse of Corollary \ref{cor:UMMINJECTIVE} does not
hold in general. For a counterexample when $\G$ is
a group, see \cite[p. 8]{N82}.
\end{rem}

Let $A$ be an object in an abelian category. Recall that a
subobject $B$ of $A$ is called essential (small) in $A$ if $B
\cap C \neq 0$ ($B + C \neq A$) for every nonzero subobject $C$
of $A$.

\begin{prop}\label{prop:NMESSENTIAL}
Suppose that $R$ is object unital and $M,N \in \grmod$
where $N$ is a graded submodule of $M$. 
\begin{itemize}
\item[{\rm (a)}] The module $N$ is essential in $M$
if and only if $U(N)$ is essential in $U(M)$.
\item[{\rm (b)}] If $U(N)$ is small in $U(M)$,
then $N$ is small in $M$.
\end{itemize}
\end{prop}

\begin{proof}
The proof of \cite[Proposition 3.6.1]{lundstrom2004} works
in the non-unital situation also.
\end{proof}

\begin{rem}
(a) The reversed implication in Proposition \ref{prop:NMESSENTIAL}(b) does
not hold in general. For a counterexample in the case when
$\G$ is a group, see \cite[p. 10]{N82}.

(b) It is not clear to the authors if Proposition \ref{prop:NMESSENTIAL}
holds for general groupoid graded rings $R$ and $M,N \in \grmd$.
\end{rem}

We say that $M \in \grmd$ is flat 
if the functor ${\rm -} \otimes_R M : \gmdr \to \Ab_\G$ is exact.

\begin{prop}
If $R$ is object unital and $M \in \grmod$, then
the following five statements are equivalent:
\begin{itemize}
\item[{\rm (i)}] The module $U(M)$ is flat.
\item[{\rm (ii)}] The module $M$ is flat.
\item[{\rm (iii)}] For every finitely presented 
$P \in \grmod$, the canonical graded map ${\rm HOM}_R(P,R) \otimes_R M
\rightarrow {\rm HOM}_R(P,M)$ is surjective.
\item[{\rm (iv)}] For every finitely presented $P \in \grmod$ 
and each semi-graded map $u: P \rightarrow M$,
there is $F \in \grmod$, free of finite type, such
that $U(F)$ is free of finite type, and semi-graded maps 
$v : P \to F$ and $w : F \rightarrow M$, 
such that $u = w \circ v$.
\item[{\rm (v)}] The module $M$ is the direct limit of 
$F_i \in \grmod$, $i \in I$, of finite type, such that each
$U(F_i)$ is free of finite type.
\end{itemize}
\end{prop}

\begin{proof}
The proof of \cite[Proposition 3.7.2]{lundstrom2004}
works in the non-unital situation also.
\end{proof}

\section{Semisimplicity}\label{sec:semisimplicity}

Throughout this section, $R$ denotes an object unital $\G$-gradd ring.
In classical module theory, given $M\in\rmod$, 
the following properties are equivalent:
\begin{itemize}
\item $M$ is semisimple;
\item $M$ is a direct sum of simple modules; 
\item every submodule of $M$ is a direct summand.
\end{itemize}
In this section, we prove a graded version of this result
(see Proposition \ref{ss2}). 
After that, we show that semisimplicity is a well behaved
under the preimage of the forgetful functor 
(see Proposition \ref{preimageU}).
At the end of this section, we obtain a result
relating graded simplicity of $R$ (as an object in $\grmod$)
to graded injectivity and graded projectivity 
(see Proposition \ref{equivalent}).

\begin{defi}\label{def:semisimple}
Let $M \in \grmod$.
We say that $M$ is {\it simple} if $\{ 0 \}$ and $M$
are the only graded submodules of $M$.
We say that $M$ is {\it semisimple} if $M$ is the direct sum
of a family of simple modules in $\grmod$.
\end{defi}

\begin{lem}\label{as}
Let $M \in \grmod$ and suppose that 
$M = \sum_{i\in I} M_i$, where, for all $i \in I$, 
$M_i \in \grmod$ is a simple module.
If $N \in \grmod$ is a graded submodule of of $M$, 
then there exists $J\subseteq I$ such that 
$M=N\oplus\bigoplus_{j\in J}M_j.$
\end{lem} 

\begin{proof}
Consider the non-empty set 
$$\mathcal{M}=\left\{J\subseteq I: N+\sum_{j\in J} 
M_j\textup{ is direct}\right\}.$$ 
The set $\mathcal{M}$ is partially ordered by inclusion. 
We claim that $\mathcal{M}$ is inductive.
If we assume that the claim holds, then by an application of 
Zorn's Lemma, the maximal element of $\mathcal{M}$ is exactly $M$.

Now we show the claim.
Let $\mathcal{B}$ be a chain in $\mathcal{M}$. Put $K=\bigcup_{J\in\mathcal{B}}J$. Then $K$ is an upper bound for $\mathcal{B}$ and $K\in\mathcal{M}$. For, if $K\not\in\mathcal{M}$, there will be $j_1\in J_1,\ldots,j_r\in J_r$ and $n\in N,m_1\in M_{j_1},\ldots, m_{j_r}\in M_{j_r}$ not all zero such that $$0=n+\sum_{t=1}^{r} m_{j_t}.$$ Being $\mathcal{B}$ chain, there is $J\in\mathcal{B}$ such that $j_1,\ldots,j_r\in J$ and consequently the sum $N+\sum_{j\in J} M_j$ will not be direct, which is impossible. Therefore,  Zorn's Lemma provides a maximal $J\subseteq I$ with the property that $L=N+\sum_{j\in J} M_j$ is direct. If we can show that $L=M$ we will be end. For this, is enough to see that $M_i\subseteq L$, for every $i\in I$. But this follows immediately since if $M_i\cap L=\{0\}$ then $J\cup \{i\}\in\mathcal{M}$.
\end{proof}

\begin{prop}\label{ss1}
The following properties of an object $M\in\grmod$ are equivalent:
\begin{itemize}
\item[(i)] $M$ is semisimple.
\item[(ii)] $M$ is a direct sum of simple modules.
\end{itemize}
\end{prop}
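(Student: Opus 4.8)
The plan is to prove the equivalence of semisimplicity (being a sum of simple submodules) and being a direct sum of simple submodules, following the classical argument but carefully staying inside the category $\grumod$ so that all submodules in play are graded and unital. The implication ii) $\Rightarrow$ i) is immediate: a direct sum is in particular a sum, so if $M=\bigoplus_{j\in J}S_j$ with each $S_j$ simple, then $M=\sum_{j\in J}S_j$ is a sum of simple modules and hence semisimple by definition. The substance is in i) $\Rightarrow$ ii).

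For i) $\Rightarrow$ ii), suppose $M=\sum_{i\in I}S_i$ with each $S_i$ a simple graded submodule. First I would invoke Zorn's Lemma on the collection of subsets $K\subseteq I$ for which the sum $\sum_{i\in K}S_i$ is direct (i.e.\ the $S_i$, $i\in K$, form an internal direct sum inside $M$). This collection is nonempty and closed under unions of chains, since directness is a finitary condition, so it contains a maximal element $K$. Set $N=\bigoplus_{i\in K}S_i$, which is a graded submodule of $M$. The key claim is that $N=M$. If not, then since $M=\sum_{i\in I}S_i$ there must exist some index $i_0\in I$ with $S_{i_0}\not\subseteq N$. Because $S_{i_0}$ is simple, its only graded submodules are $\{0\}$ and $S_{i_0}$, and $S_{i_0}\cap N$ is a graded submodule of $S_{i_0}$ properly contained in it, forcing $S_{i_0}\cap N=\{0\}$. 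Then the sum $N+S_{i_0}=N\oplus S_{i_0}$ is direct, so $K\cup\{i_0\}$ lies in our collection, contradicting the maximality of $K$. Hence $N=M$ and $M=\bigoplus_{i\in K}S_i$ is a direct sum of simple modules.

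The point requiring care, and the step I expect to be the main obstacle, is ensuring that everything remains within $\grumod$: that $S_{i_0}\cap N$ is genuinely a \emph{graded} submodule (so that simplicity of $S_{i_0}$ applies to it, since simplicity is defined via graded submodules), and that the internal direct sums $\bigoplus_{i\in K}S_i$ are again objects of $\grumod$. The intersection of two graded submodules is graded, and an arbitrary (even infinite) direct sum of graded unital modules is again graded unital, so these closure properties hold; one should remark that this is exactly why the argument does not leave the category. With these observations the classical Zorn's-Lemma proof transfers verbatim, which is consistent with the paper's stated convention of omitting routine details that mirror the ungraded case.
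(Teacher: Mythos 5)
Your proof is correct and follows essentially the same route as the paper: Zorn's Lemma applied to the index subsets over which the partial sum is direct, followed by the observation that simplicity forces each $S_{i_0}$ either to lie in the maximal direct sum or to meet it trivially. The only difference is that the paper proves the slightly stronger Assertion that $M=N\oplus\bigoplus_{j\in J}M_j$ for an \emph{arbitrary} graded submodule $N$ (recovering your statement by taking $N=\{0\}$), which it then reuses in Proposition \ref{directt}; your version establishes the proposition itself just as well.
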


\begin{proof}
The implication (ii)$\Rightarrow$(i) follows from
Definition \ref{def:semisimple}.
Suppose that (i) holds.
Let $M=\sum_{i\in I}M_i$ be a sum of simples modules. 
The claim now follows from Lemma \ref{as} by taking $N=\{0\}$.
\end{proof}


\begin{prop}\label{directt}
Let $M=\bigoplus\limits_{i\in I} M_i$ be a sum of simple modules and let $N$ be a graded submodule of $M$. Then:
\begin{itemize}
\item[(i)] $N$ is a direct summand.
\item[(ii)] $N\cong \bigoplus_{j\in J} M_j$, for some $J\subseteq I$ and the isomorphism is given in $\grmod$.
\end{itemize}
\end{prop}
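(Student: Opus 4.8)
The plan is to derive both parts directly from Assertion~\ref{as}, which was established inside the proof of Proposition~\ref{ss1} and applies to an arbitrary graded submodule of $M$. Since $M=\bigoplus_{i\in I}M_i$ is in particular a sum of the simple modules $M_i$, the assertion is available with the same index set $I$.

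For part~i), I would simply apply Assertion~\ref{as} to the graded submodule $N$. This produces a subset $J'\subseteq I$ with $M=N\oplus\bigoplus_{j\in J'}M_j$, which by definition exhibits $N$ as a direct summand of $M$, with explicit complement $C=\bigoplus_{j\in J'}M_j$.

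For part~ii), I would reuse this decomposition. Consider the composite $N\hookrightarrow M\twoheadrightarrow M/C$ of the inclusion with the canonical projection. Its kernel is $N\cap C=\{0\}$, because the sum $N+C$ is direct, and it is surjective because $M=N+C$; hence it is an isomorphism $N\cong M/C$. Since $M=\bigoplus_{i\in I}M_i$ and $C$ is the graded submodule spanned by the summands indexed by $J'$, the quotient satisfies $M/C\cong\bigoplus_{i\in I\setminus J'}M_i$. Setting $J=I\setminus J'$ then yields $N\cong\bigoplus_{j\in J}M_j$, as required.

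The only point requiring attention—routine rather than a genuine obstacle—is that every map above must respect the grading, so that the isomorphisms live in $\grumod$ and not merely in $\rmod$. This is automatic: $N$ and $C$ are graded submodules, so $M/C$ carries the induced grading, the canonical projection is graded, and the identification $M/C\cong\bigoplus_{i\in I\setminus J'}M_i$ is compatible with the direct-sum grading of $M$. Because $\grumod$ is an abelian category, the first isomorphism theorem used in part~ii) is available internally and delivers a graded isomorphism, so no extra work beyond invoking Lemma~\ref{lem:GRADEDLEMMA}-style bookkeeping is needed.
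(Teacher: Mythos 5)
Your proof is correct for the statement as written, and part i) coincides with the paper's argument (both simply invoke Assertion \ref{as} with $N$ in place of the arbitrary graded submodule). For part ii) you take a genuinely different, and shorter, route: the paper applies Assertion \ref{as} a \emph{second} time, now to the complement $L$ of $N$, obtaining $M=L\oplus\bigoplus_{j\in J}M_j$ and hence $N\cong M/L\cong\bigoplus_{j\in J}M_j$, whereas you observe that the complement $C=\bigoplus_{j\in J'}M_j$ of $N$ already consists of summands of $M$ and use the directness of $M=\bigoplus_{i\in I}M_i$ to identify $M/C$ with $\bigoplus_{i\in I\setminus J'}M_i$. Your version saves one application of Zorn's Lemma, but it genuinely uses that the family $(M_i)_{i\in I}$ is independent: if $M$ were only a (not necessarily direct) sum of the simple modules $M_i$ --- which is how semisimplicity is defined in the paper and how this proposition gets used in the corollary that follows --- then $M/C$ need not be isomorphic to $\bigoplus_{i\in I\setminus J'}M_i$, while the paper's second appeal to the assertion still goes through verbatim. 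So your argument is valid under the hypothesis as literally stated (with $\bigoplus$); if you want it to survive the weaker hypothesis of an arbitrary sum, replace the direct computation of $M/C$ by a second application of Assertion \ref{as}, applied to the complement of $N$. Your remarks on the grading bookkeeping are accurate and need no further justification.
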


\begin{proof}
(i) follows directly from the proof of Lemma \ref{as}. 
Now we prove (ii). 
By (i), there is a graded submodule $K$ of $M$ such that $M=N\oplus L$. By an application of Zorn's Lemma as in the proof of Lemma \ref{as}, $M=L\oplus\bigoplus_{j\in J} M_j$ for some $J\subseteq I$. Therefore, $N\cong M/L\cong\bigoplus_{j\in J}M_j$ and the isomorphism is given in $\grmod$.
\end{proof}


\begin{cor}
Every graded submodule and every quotient of a semisimple module is semisimple.
\end{cor}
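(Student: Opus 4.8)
The plan is to deduce both statements directly from Proposition \ref{directt}, which already does the heavy lifting. By Proposition \ref{ss1} a semisimple module $M$ can be written as a direct sum $M=\bigoplus_{i\in I}M_i$ of simple graded modules, so $M$ satisfies the hypothesis of Proposition \ref{directt}. I would treat the two claims separately.

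For the submodule statement, let $N$ be a graded submodule of $M$. By Proposition \ref{directt}(ii) there is a subset $J\subseteq I$ and an isomorphism $N\cong\bigoplus_{j\in J}M_j$ in $\grumod$. Since each $M_j$ is simple, this exhibits $N$ as a sum (indeed a direct sum) of simple modules, hence $N$ is semisimple by Proposition \ref{ss1}. So the only work is to invoke the earlier proposition and read off that a direct sum of the simple constituents $M_j$ is again semisimple.

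For the quotient statement, let $N$ be a graded submodule and consider the quotient $M/N$. By Proposition \ref{directt}(i) the submodule $N$ is a direct summand, so there is a graded submodule $L$ with $M=N\oplus L$; consequently $M/N\cong L$ in $\grumod$. Applying Proposition \ref{directt}(ii) to $L$ (which is itself a graded submodule of $M$) yields $L\cong\bigoplus_{k\in K}M_k$ for some $K\subseteq I$, so $M/N$ is a direct sum of simple modules and therefore semisimple by Proposition \ref{ss1}.

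The argument is essentially a bookkeeping exercise, so I expect no genuine obstacle; the only point requiring a little care is to make sure every isomorphism is claimed in the graded category $\grumod$ rather than merely in $\rmod$, which is guaranteed by the statements of Proposition \ref{directt}. One should also note that the general semisimple $M$ is a sum, not a priori a direct sum, of simples, but Proposition \ref{ss1} converts this into the direct-sum form needed to apply Proposition \ref{directt}.
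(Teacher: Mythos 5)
Your proposal is correct and follows exactly the route the paper intends: the corollary is stated as a direct consequence of Proposition \ref{directt}, with part (ii) handling submodules and part (i) (plus (ii) applied to the complement) handling quotients. The care you take about gradedness of the isomorphisms and about converting a sum of simples into a direct sum via Proposition \ref{ss1} is exactly the right bookkeeping.
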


\begin{proof}
This follows immediately from Proposition \ref{directt}.
\end{proof}

\begin{lem}\label{suspension-ideal}
If $M$ is an object in $\grmod$ and $M$ is finitely generated, as an object in $\grmod$, then $M$ contains a maximal graded submodule.
\end{lem}

\begin{proof}
Consider the collection $\mathcal{L}$ of all proper graded submodules of $M$. This is a non-empty set partially ordered by inclusion. Let $\mathcal{K}$ be a chain in $\mathcal{L}$ and put $N=\sum_{L\in\mathcal{K}}L$. Then $N$ is an upper bound for $\mathcal{K}$ and $N\in\mathcal{L}$. Otherwise $N=M$ and there would be a finite set $I\subseteq H(M)$ such that $N=\sum_{m\in I}Rm$. But then every $m\in I$ would belong to the graded submodule $Rm$ in $\mathcal{K}$ and since $\mathcal{K}$ is a chain, the finite sum $N=\sum_{m\in I}Rm\in\mathcal{K}$, leading to the contradiction $M\in\mathcal{K}$. From this, Zorn's Lemma provides a maximal graded submodule of $M$.
\end{proof}

\begin{prop}\label{ss2}
For an object $M\in\grmod$ the following properties are equivalent:
\begin{itemize}
\item[(i)] $M$ is semisimple
\item[(ii)] $M$ is a direct sum of simple modules.
\item[(iii)] Every graded submodule of $M$ is a direct summand.
\end{itemize}
\end{prop}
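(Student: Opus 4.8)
The plan is to prove the cyclic equivalence $i)\Rightarrow iii)\Rightarrow ii)\Rightarrow i)$, leaning on the machinery already assembled in Propositions \ref{ss1} and \ref{directt}. Since Proposition \ref{ss1} gives the equivalence of $i)$ and $ii)$ outright, the genuinely new content here is the interaction with condition $iii)$. First I would dispose of $i)\Rightarrow iii)$: assuming $M$ is semisimple, Proposition \ref{ss1} lets me write $M=\bigoplus_{i\in I}M_i$ as a direct sum of simple modules, and then $iii)$ is exactly item i) of Proposition \ref{directt}, which says every graded submodule of such a direct sum is a direct summand. So this implication is immediate from the preceding result.

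The main work is the implication $iii)\Rightarrow ii)$, which I would prove in two stages. First I would establish that under $iii)$ every nonzero graded submodule $N$ of $M$ contains a simple graded submodule. To see this, pick a nonzero homogeneous element $m\in N$; by Lemma \ref{suspension-ideal}(ii) the cyclic module $Rm$ is a unital $\G$-graded submodule, and it is finitely generated, so by Lemma \ref{suspension-ideal}(iii) it contains a maximal graded submodule $P$. Applying hypothesis $iii)$ to $Rm$ (a submodule inherits property $iii)$ because any direct-summand decomposition of $M$ restricts appropriately, or more directly one observes $P$ is a direct summand of $M$ and intersects $Rm$ suitably) yields a complement whose intersection with $Rm$ is a simple graded submodule of $N$.

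Having produced simple graded submodules, the second stage is a Zorn's Lemma argument paralleling the one in Proposition \ref{ss1}. Let $\{S_k\}_{k\in K}$ be the family of all simple graded submodules of $M$ and let $M'=\sum_{k\in K}S_k$; by the first stage, $M'$ is the largest submodule that is a sum of simples. Using hypothesis $iii)$, write $M=M'\oplus C$ for some graded submodule $C$. If $C\neq\{0\}$, then the first stage applied to $C$ produces a simple graded submodule of $C$, hence a simple graded submodule of $M$ not contained in $M'$, contradicting maximality of $M'$. Therefore $C=\{0\}$ and $M=M'$ is a sum of simple modules; invoking Proposition \ref{ss1} again (or \ref{directt}) upgrades this to a direct sum, giving $ii)$.

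Finally, $ii)\Rightarrow i)$ is already recorded in the straightforward direction of Proposition \ref{ss1}, so the cycle closes. The step I expect to be the main obstacle is the production of a simple graded submodule inside an arbitrary nonzero graded submodule: one must be careful that the maximal graded submodule furnished by Lemma \ref{suspension-ideal}(iii) lives inside the \emph{cyclic} module $Rm$, that its direct complement (provided by $iii)$) is itself graded, and that the resulting complement is genuinely simple rather than merely nonzero. Verifying that condition $iii)$ is inherited by submodules, and that all complements can be chosen graded, is where the graded bookkeeping concentrates; the rest is a routine transcription of the classical semisimplicity argument.
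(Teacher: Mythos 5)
Your proposal is correct and follows essentially the same route as the paper: both reduce the new content to the implication from iii), prove that every nonzero graded submodule contains a simple one by taking a homogeneous cyclic submodule $Rm$, extracting a maximal graded submodule via Lemma \ref{suspension-ideal}(iii), and using the complement supplied by iii) together with the modular law to exhibit $Rm\cap K'\cong Rm/K$ as simple; then both conclude that the sum of all simple graded submodules must be all of $M$. The only cosmetic difference is that you close the cycle through ii) while the paper goes directly to i), which is immaterial given Proposition \ref{ss1}.
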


\begin{proof} 
The equivalence (i)$\Leftrightarrow$(ii) follows
from Proposition \ref{directt}. 
The implication (i)$\Rightarrow$(iii) follows 
from Lemma \ref{as}.
Finally, we prove the implication 
(iii)$\Rightarrow$(i). 
Suppose that every graded submodule of $M$ is a direct summand.
Then, in particular, $L=\sum\{N\colon N\textup{ is a graded simple submodule of } M\}$ is a direct summand of $M$. 
It is enough to show that the complement of $L$ is $\{0\}$. 
To this end, note that every graded submodule of $M$ contains a simple submodule. In fact, since every graded submodule is a sum of homogeneous cyclic modules, is enough to see this assertion is valid for every $Rm$, $m\in H(M)$. Given $m\in H(M)$ since $R$ is object unital then $Rm$ is finitely generated and 
then by Lemma \ref{suspension-ideal} there exists 
a maximal graded submodule $K$ of $Rm$. By hypothesis, $M=K\oplus K^{\prime}$ with $K^{\prime}$ graded submodule of $M$. But $Rm=M\cap Rm=K\oplus (K^{\prime}\cap Rm)$, then $Rm\cap K^{\prime}\cong Rm/K$ is a simple submodule of $Rm$ due to the maximality of $K$ over $Rm$. Summarizing, $L=M$ is a sum of simple modules. 
\end{proof}

\begin{prop}\label{preimageU}
Let $M\in\grmod$. If $U(M)$ is semisimple then $M$ is semisimple.
\end{prop}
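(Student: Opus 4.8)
The plan is to show that if $U(M)$ is semisimple then every graded submodule of $M$ is a direct summand, which by Proposition \ref{ss2} is equivalent to $M$ being semisimple. So let $N$ be a graded submodule of $M$. Since $U(M)$ is semisimple, by the ungraded characterization of semisimplicity $U(N)$ is a direct summand of $U(M)$, i.e.\ there is an $R$-submodule $C$ of $U(M)$ with $U(M) = U(N) \oplus C$. First I would invoke Corollary \ref{cor:DIRECTSUMMAND}: since $N$ is a graded submodule of $M$ and $U(N)$ is a direct summand of $U(M)$, that corollary yields directly that $N$ is a direct summand of $M$ \emph{in the graded sense}. Applying this to an arbitrary graded submodule $N$ gives condition iii) of Proposition \ref{ss2}, and hence $M$ is semisimple.

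The only subtlety is that to use Corollary \ref{cor:DIRECTSUMMAND} one needs $U(N)$ to be a direct summand of $U(M)$ for $N$ running over graded submodules; this is immediate because $U(M)$ semisimple means \emph{every} $R$-submodule of $U(M)$ is a direct summand, in particular the underlying module $U(N)$ of any graded submodule. Thus the chain of implications is: $U(M)$ semisimple $\Rightarrow$ every $R$-submodule of $U(M)$ is an ungraded direct summand $\Rightarrow$ (for graded $N$) $U(N)$ is a direct summand of $U(M)$ $\Rightarrow$ (by Corollary \ref{cor:DIRECTSUMMAND}) $N$ is a graded direct summand of $M$ $\Rightarrow$ (by Proposition \ref{ss2}) $M$ is semisimple.

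The main point to be careful about is that $M$ must be an object of $\grumod$ for Proposition \ref{ss2} to apply, whereas the statement is phrased for $M \in \grmod$; I would either restrict to $M \in \grumod$ or note that semisimplicity forces the relevant unitality (each simple summand, and hence $M = RM$, is graded unital), so no genuine obstacle arises. The argument is short because all the real work has been done in Corollary \ref{cor:DIRECTSUMMAND} (which packages Lemma \ref{lem:GRADEDLEMMA}) and in Proposition \ref{ss2}; the proof is essentially a bookkeeping combination of these two results together with the standard ungraded fact that submodules of a semisimple module are direct summands.
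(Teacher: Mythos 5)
Your proof is correct and follows essentially the same route as the paper: the paper also takes an arbitrary graded submodule $N$, uses semisimplicity of $U(M)$ to produce an ungraded retraction, upgrades it via Lemma \ref{lem:GRADEDLEMMA} (which is exactly what Corollary \ref{cor:DIRECTSUMMAND} packages) to conclude $N$ is a graded direct summand, and then invokes the characterization of Proposition \ref{ss2}. Your remark about the $\grmod$ versus $\grumod$ hypothesis is a reasonable extra point of care, but the argument is the same.
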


\begin{proof}
Let $N$ be a graded submodule of $M$. If $U(M)$ is semisimple then $U(N)$ is a  direct summand, so there is $f\colon M\to N$ in $\rmod$ such that $f\circ \iota_N=\id{N}$, where $\iota_N\colon N\to M$ is the canonical inclusion. 
By Lemma \ref{lem:GRADEDLEMMA} we can assume that
$f$ is graded and hence
$N$ is a direct summand of $M$ in $\grmod$.
\end{proof}

\begin{defi}
We say that $R$ is {\it semisimple} as a graded ring
if it is semi\-simple considered as an object in $\grmod$.
\end{defi}

\begin{prop}\label{equivalent}
If $R$ is an object-unital ring, 
then the following properties are equivalent:
\begin{itemize}
\item[(i)] The ring $R$ is semisimple as a graded ring.
\item[(ii)] Every graded left ideal $I$ of $R$ is a direct summand
of $R$.
\item[(iii)] Every object in $\grmod$ is injective.
\item[(iv)] Every object in $\grmod$ is projective.
\item[(v)] Every object in $\grmod$ is semisimple.
\end{itemize}
\end{prop}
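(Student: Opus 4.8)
The plan is to prove the chain of equivalences by establishing a cycle of implications, following the classical pattern for characterizing semisimple rings but carrying everything through the graded unital category. First I would prove ii) $\Leftrightarrow$ i) directly from Proposition \ref{ss2}: a graded left ideal of $R$ is precisely a graded submodule of $R$ viewed as an object of $\grumod$ (recall $R$ lies in $\grumod$ since it is free by suspension, Example \ref{EXSUM}), so saying every graded left ideal is a direct summand is exactly condition iii) of Proposition \ref{ss2} applied to $M=R$, which is equivalent to $R$ being semisimple.

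Next I would handle v) $\Rightarrow$ iv) and v) $\Rightarrow$ iii) together. Assume every object of $\grumod$ is semisimple. For projectivity, take any object $P$ and any short exact sequence $0\to L\to M\to P\to 0$ in $\grumod$; since $M$ is semisimple, its graded submodule $\operatorname{im}(L\to M)$ is a direct summand by Proposition \ref{ss2}, so the sequence splits, and by Proposition \ref{caracpro} (the equivalence iii) $\Leftrightarrow$ iv)) $P$ is projective. Dually, for injectivity, take $Q\in\grumod$ and a short exact sequence $0\to Q\to M\to N\to 0$; semisimplicity of $M$ makes the image of $Q$ a direct summand, so the sequence splits, and Proposition \ref{inyecinde} (equivalence i) $\Leftrightarrow$ ii)) gives that $Q$ is injective.

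The heart of the argument is closing the loop, and I expect the main obstacle to be proving iv) $\Rightarrow$ i) (or iii) $\Rightarrow$ i)) together with i) $\Rightarrow$ v). For i) $\Rightarrow$ v): if $R$ is semisimple, then by the equivalence ii) $\Leftrightarrow$ i) every graded left ideal is a direct summand; now any $M\in\grumod$ is a quotient of a free module by suspension (Proposition \ref{cocientelibre}), and each $R(\sigma)=R1_{R_{d(\sigma)}}$ is a direct summand of $R$ hence semisimple, so a direct sum of suspensions is semisimple by Proposition \ref{ss1}, and by the Corollary that quotients of semisimple modules are semisimple, $M$ is semisimple. For iv) $\Rightarrow$ i): assuming every object is projective, take any graded left ideal $I$ and form $0\to I\to R\to R/I\to 0$; since $R/I\in\grumod$ is projective, this splits by Proposition \ref{caracpro}, so $I$ is a direct summand, which is condition ii), hence i).

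Finally I would close with iii) $\Rightarrow$ i) by the dual device: assuming every object is injective, take a graded left ideal $I$ and consider $0\to I\to R\to R/I\to 0$; since $I$ is injective by hypothesis, Proposition \ref{inyecinde} forces this sequence to split, so again $I$ is a direct summand and we land in condition ii). This gives the cycle i) $\Rightarrow$ v) $\Rightarrow$ iv) $\Rightarrow$ i) and i) $\Rightarrow$ v) $\Rightarrow$ iii) $\Rightarrow$ i), together with i) $\Leftrightarrow$ ii), completing all equivalences. The delicate point throughout is ensuring that the splittings produced live in $\grumod$ rather than merely in $\rmod$, but this is guaranteed by the graded characterizations in Propositions \ref{caracpro} and \ref{inyecinde}, which were established precisely so that projectivity and injectivity in $\grumod$ are detected by splitting of graded short exact sequences.
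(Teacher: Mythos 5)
Your proof is correct, but it closes the equivalences along a genuinely different cycle than the paper. The paper runs i) $\Rightarrow$ ii) $\Rightarrow$ iii) $\Rightarrow$ iv) $\Rightarrow$ v) $\Rightarrow$ i), and its only substantive step is ii) $\Rightarrow$ iii): given $R=I\oplus J$ and a graded map $g\colon I\to M$, it extends $g$ to $f\colon R\to M$ by $f(i+j)=g(i)$ and invokes the graded Baer criterion (Proposition \ref{inyecinde}(iv)) to conclude every object is injective; the remaining implications are the same splitting arguments you use, and v) $\Rightarrow$ i) is trivial. You instead make i) $\Leftrightarrow$ ii) the anchor (a clean specialization of Proposition \ref{ss2} to $M=R$), prove i) $\Rightarrow$ v) by structure theory (Proposition \ref{cocientelibre} plus closure of semisimplicity under direct sums and quotients), and return to i) from iii) and from iv) by splitting $0\to I\to R\to R/I\to 0$. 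What the paper's route buys is that the whole burden is carried by Baer's criterion, so no analysis of suspensions is needed; what your route buys is that it bypasses Baer entirely and exposes the module-theoretic content (semisimplicity propagates from $R$ to everything), at the cost of one extra lemma. That lemma is the one place you are imprecise: $R(\sigma)=R1_{R_{d(\sigma)}}$ is \emph{not} a graded direct summand of $R$ when $\sigma\notin\G_0$, since its homogeneous component of degree $\tau$ is $R_{\tau\sigma}$, which is not contained in $R_\tau$; so "direct summand of $R$ hence semisimple" does not literally apply. The conclusion still holds, either because $R(\sigma)$ and $R(d(\sigma))$ have identical underlying $R$-modules and identical lattices of graded submodules (only the labelling of degrees differs), with $R(d(\sigma))$ a genuine graded direct summand of $R$, or by transporting semisimplicity through the auto-equivalence $T_{\Sigma_\sigma}$ of Proposition \ref{prop:sigmainv}(b). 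With that one sentence added, your argument is complete.
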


\begin{proof}
(i)$\Rightarrow$(ii): 
This follows from Proposition \ref{ss2}.

(ii)$\Rightarrow$(iii): 
Let $M\in\grmod$, $I,J$ be graded left ideals of $R$ such that $R=I\oplus J$ and $g\in {\rm HOM}_R(I,M)$. The function $f\colon R\to M$ defined by $f(i+j)=g(i),$ for every $i\in I$ and every $j\in J$, satisfies $g=f\circ \iota$, where $\iota\colon I\to R$ is the inclusion, and $f\in {\rm HOM}_R(R,M)$. By Baer's Criteria (Proposition \ref{inyecinde}), $M$ is injective.

(iii)$\Rightarrow$(iv): 
If \[ \begin{tikzcd}[cramped, column sep = small]
0 \arrow[r] & L \arrow[r] & M \arrow[r] & N \arrow[r] & 0
\end{tikzcd} \] is a short exact sequence in $\grmod$, that $L$ is injective implies by Proposition \ref{inyecinde} that the sequence splits. But this is equivalent to (iv) by Proposition \ref{caracpro}.

(iv)$\Rightarrow$(v): 
For every object in $\grmod$, any graded submodule induces a short exact sequence that split by hypothesis, turning it into a direct summand.

(v)$\Rightarrow$(i): This is clear. 
\end{proof}

\end{document}